\documentclass[a4paper,reqno,10pt]{amsart}

\usepackage[all]{xy}
\usepackage[english]{babel}
\usepackage{amssymb}
\usepackage{graphicx}
\usepackage{amsmath,amsthm,amssymb,xypic,verbatim}

\newcommand{\p}{\mathbb{P}}

\newcommand{\N}{\mathbb{N}}

\newcommand{\C}{\mathbb{C}}

\renewcommand{\o}{{\mathcal O}}

\newcommand{\I}{\mathcal{I}}

\newcommand{\h}{{\mathcal H}}

\newcommand{\T}{\mathbb{T}}

\newcommand{\Bs}{\mathop{\rm Bs}\nolimits}

\newcommand{\codim}{\mathop{\rm codim}\nolimits}

\newcommand{\Sec}{\mathop{{\rm{\mathbb S}ec}}\nolimits}
\newcommand{\Span}[1]{\langle#1\rangle}

\theoremstyle{plain}

\newtheorem{thm}{Theorem}
\newtheorem{prop}[thm]{Proposition}
\newtheorem{lem}[thm]{Lemma}   

\newtheorem{claim}{Claim}
\theoremstyle{definition}

\newtheorem{dfn}[thm]{Definition}
\newtheorem{rem}[thm]{Remark}
\newtheorem{remark}[thm]{Remark}

\newcommand{\mult}[0]{\operatorname{mult}}

\begin{document}
\title{Tangential weak defectiveness and  generic identifiability}

\author[Alex Casarotti]{Alex Casarotti}
\address{\sc Alex Casarotti\\ Dipartimento di Matematica e Informatica, Universit\`a di Ferrara, Via Machiavelli 30, 44121 Ferrara, Italy}
\email{csrlxa@unife.it}

\author[Massimiliano Mella]{Massimiliano Mella}
\address{\sc Massimiliano Mella\\ Dipartimento di Matematica e Informatica, Universit\`a di Ferrara, Via Machiavelli 30, 44121 Ferrara, Italy}
\email{mll@unife.it}

\subjclass[2020]{Primary 14N07; Secondary 14N05, 14N15, 14M15, 15A69, 15A75}
\keywords{Secant varieties, secant defectiveness, weak defectiveness, tangential weak defectiveness, identifiability}
\date{\today}
\begin{abstract}
 We investigate the uniqueness of decomposition of general tensors $T\in {\mathbb
  C}^{n_1+1}\otimes\cdots\otimes{\mathbb C}^{n_r+1}$ as a sum
 of tensors of rank $1$.  This is done  extending the theory developed in \cite{Me1} to the framework of non twd varieties. In this way we are able to prove the non
 generic identifiability of infinitely many partially symmetric tensors.
\end{abstract}

\maketitle 

 \section{Introduction}
The decomposition of tensors $T\in {\mathbb
  C}^{n_1+1}\otimes\cdots\otimes{\mathbb C}^{n_r+1}$ as a sum
 of {\it simple} tensors (i.e. tensors of rank $1$) is a central
 problem for many applications from Multilinar Algebra to Algebraic
 Statistics, coding theory, blind signal separation and others, \cite{DDL1},\cite{DDL2},\cite{DDL3},\cite{KADL},\cite{Si}.

 For  statistical inference, it is meaningful  to know if  
 a probability distribution, arising from a
model, uniquely determines the parameters that produced it. 
When this happens, the parameters are called {\it identifiable}.
There are no useful models where all distributions are identifiable. 
Then the notion of {\it generic} identifiability for parametric models  
has been considered for instance in \cite{AMR09} and in \cite{SR12}.
Conditions which guarantee the uniqueness of decomposition, 
for generic tensors in the model, are quite important in the applications. 
When generic identifiability holds, the set of non-identifiable
parameters has measure zero, thus parameter inference is still meaningful.
Notice that many decomposition algorithms converge to {\it one} decomposition,
hence a uniqueness result guarantees that the decomposition found is the chased one. 
We refer to \cite{KB} and  its huge  reference list, for more details.

From a purely theoretical point of view, the study of unique
decompositions, or canonical forms in the early $\rm XX^{\rm th}$ century
dictionary, has connection with both invariant theory, \cite{Hi}, and
projective geometry, \cite{Pa} \cite{Ri}. 
It is already over a decade,   \cite{Me1},  that generic identifiability of symmetric
tensors has shown its close
connection to modern birational projective geometry and especially to
the maximal singularities methods. In a series of papers, \cite{Me1}
\cite{Me2} \cite{GM},  the generic
identifiability problem for symmetric tensors has been completely
solved. 

The present paper is devoted to extend this theory to arbitrary
tensors and can be considered as a first step, similar to \cite{Me1},
in this direction. As for the symmetric case it is expected that
identifiability is very rare and our result support this convincement.

The main tool in \cite{Me1} was the use, after
\cite{CC1},  of non weakly
defective varieties to study identifiability, see Section~\ref{sec:notation}
for all the relevant definitions. Unfortunately it is very hard to
determine the weak defectiveness of general tensors.  This difficulty prevented, for many years, a
straightforward application of the same techniques to them, see
\cite{Fo} for a similar approach in special cases.

 In recent years the notion of tangential weakly defectiveness,
 introduced in \cite{CO1}, has gradually substituted the weak
 defectivity and proved valid to study generic identifiability of
 subgeneric tensors, \cite{CO1} \cite{BDdG} \cite{BC} \cite{BCO}
 \cite{Kr} \cite{CM}. In particular thanks to the main result in
 \cite{CM} for the  generic identifiability we may assume without
 loss of generality the non tangential weakly defectiveness under mild
 numerical assumptions.

 Tangential weakly defectiveness does not behave as
 weakly defectiveness with respect to the maximal singularities
 method. Therefore in this paper we develop tools to plug in maximal singularities
 methods for non tangentially weakly defective varieties.
 In this way we are able to prove
the non identifiability of many partially symmetric tensors.
The main technical result is a study of the nested  singularities of tangential
linear system for non tangentially weakly defective varieties and with
this we are able to prove the following statement on identifiability
of partially symmetric tensors.

\begin{thm}
The generic tensor  $T\in
{\rm Sym}^{d_i}({\mathbb C}^{n_{1}+1})\otimes\cdots\otimes{\rm
  Sym}^{d_r}({\mathbb C}^{n_{r}+1})$ is
not identifiable when $d_i > n_i+1$ for any $1 \leq i \leq r$ and $\lceil\frac{{\prod}\binom{n_{i}+d_{i}}{n_{i}}}{{\sum}n_{i}+1}\rceil>2(\sum n_{i})$.
\end{thm}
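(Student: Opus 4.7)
Let $X=\nu_{d_1,\ldots,d_r}(\mathbb{P}^{n_1}\times\cdots\times\mathbb{P}^{n_r})\subset\mathbb{P}^N$ denote the Segre--Veronese variety, where $N+1=\prod_i\binom{n_i+d_i}{n_i}$ and $n:=\dim X=\sum_i n_i$. Set $k:=\lceil (N+1)/(n+1)\rceil$; this is the generic rank and it is the value at which identifiability of a general rank-$k$ tensor becomes the question of whether the set of $k$ points on $X$ spanning the tensor is unique. The hypothesis of the theorem is exactly $d_i>n_i+1$ together with the inequality $k>2n$.

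My plan is to follow the strategy of \cite{Me1}, but with non-twd replacing non weak defectiveness as the working hypothesis, which is the extension announced in the introduction. First, I would use the main theorem of \cite{CM} to reduce to the case in which $X$ is not $(k-1)$-tangentially weakly defective; the assumption $d_i>n_i+1$ is what places the Segre--Veronese at hand inside the numerical range covered by that reduction. Second, I would fix $p_1,\ldots,p_{k-1}\in X$ general together with one further general point $q\in X$, and analyze the tangential linear system $\mathcal{L}=|\mathcal{O}_X(d_1,\ldots,d_r)-2p_1-\cdots-2p_{k-1}|$, together with the sub-system $\mathcal{L}'\subset\mathcal{L}$ of divisors passing through $q$. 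Under the stronger non weakly defective hypothesis the base locus of $\mathcal{L}$ would be exactly $\{p_1,\ldots,p_{k-1}\}$, but under only non-twd one has to control a genuinely nested singularity structure along the contact locus; this control is precisely what the paper's central technical result provides.

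Third, I would feed $\mathcal{L}'$ into a Noether--Fano / maximal-singularities argument applied to a suitable resolution of the tangential map $\pi\colon X^{(k)}\dashrightarrow\sigma_k(X)$ sending a $k$-tuple of points to the class of its span. The numerical inequality $k>2\sum n_i$ provides exactly the dimensional slack needed so that the nested singularities of $\mathcal{L}$ translate into a positive-dimensional fiber of $\pi$ through the generic point $T=p_1+\cdots+p_{k-1}+q$. Such a moving family of $k$-tuples is a second decomposition of $T$, ruling out identifiability. The genuinely hard step is the last one: on a non-twd but possibly weakly defective $X$ the base locus of $\mathcal{L}$ is not zero-dimensional, so one cannot directly invoke the argument of \cite{Me1}; instead the nested-singularities analysis must be refined enough to recover an alternative decomposition exactly at the threshold $k>2\sum n_i$, which is the new ingredient this paper is designed to supply.
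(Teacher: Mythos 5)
Your outline captures the broad strategy (pass to non-twd via \cite{CM}, study the tangential system $\h(g-1)$, invoke a maximal-singularities/Noether--Fano argument), but two of its load-bearing steps do not match what is actually needed and, as you describe them, would not work. First, you have the two numerical hypotheses playing the wrong roles. In the paper the inequality $g=\lceil\prod\binom{n_i+d_i}{n_i}/(\sum n_i+1)\rceil>2\sum n_i$ is used solely to conclude, via \cite[Corollary 22]{CM}, that $X$ is not $(g-1)$-twd; it is not ``dimensional slack'' converting nested singularities into a positive-dimensional fiber of the secant map. The hypothesis $d_i>n_i+1$ has nothing to do with the reduction to non-twd: it is what makes $a=\frac{d_1}{n_1+1}>1$ after ordering the factors so that $\frac{n_1+1}{d_1}\geq\frac{n_i+1}{d_i}$, and this, together with the nefness computation $\bigl(K_X+\frac1a\h(g-1)\bigr)\cdot l_i=-(n_i+1)+\frac{n_1+1}{d_1}d_i\geq 0$ on the Mori fiber space $p:X\to SV^{(n_2,\ldots,n_r)}_{(d_2,\ldots,d_r)}$, is exactly what puts you in the range of the Noether--Fano inequalities. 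Your proposal never introduces this Mori fiber space structure or the nef adjoint divisor, and without them Theorem~\ref{Noether-Fano} simply cannot be invoked, so the whole birational-rigidity step is missing rather than merely deferred.

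Second, the shape of the final argument is different from, and weaker than, what you would need. You propose to apply Noether--Fano to (a resolution of) the secant map $X^{(k)}\dashrightarrow \Sec_k(X)$ and to produce directly a positive-dimensional family of decompositions of a general $T$; but Noether--Fano is a statement about birational maps between Mori fiber spaces, and the secant map is not such an object. The actual mechanism is a contradiction argument on the tangential projection: identifiability would force $\tau_{g-1}:X\dashrightarrow\p^{\dim X}$ to be birational (Theorem~\ref{ident}, from \cite{Me1}); on the other hand, the non-twd hypothesis feeds into Theorem~\ref{main theorem}, which shows $(X,\frac1a\h(g-1))$ has canonical singularities (no infinitely near maximal singularities), and then the nefness of $K_X+\frac1a\h(g-1)$ plus Theorem~\ref{Noether-Fano} rules out birationality of $\tau_{g-1}$ (Theorem~\ref{Main Theorem NF}). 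No second decomposition of $T$ is exhibited, and none needs to be. Relatedly, your remark that under non-wd ``the base locus of $\mathcal{L}$ would be exactly the points'' conflates base locus with the singular/contact locus; the issue controlled by Theorem~\ref{main theorem} is the multiplicity of a suitable $\mathbb{Q}$-divisor in $\h(g-1)$ at all points of a model $Y\to X$, i.e.\ canonicity of the pair, not the base locus. So while your first two steps point in the right direction, the proof as proposed has a genuine gap at the Noether--Fano stage and misidentifies where each numerical hypothesis is consumed.
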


The paper is organized as follows. After recalling notation and
definitions we study in detail the singular loci of tangential linear
systems for non tangentially weakly defective varieties. The main
technical result is Theorem~\ref{main theorem} where we prove that,
under suitable hypothesis,
these linear system have not nested  singularities.
This result allow us to apply the standard Noether--Fano inequalities
to show that some tangential projections are not birational, see
Theorem~\ref{Main Theorem NF}.
With this the non identifiability result is at hand following
\cite{Me1}.

\section{Notation}\label{sec:notation} 
We work over the complex field.
Let $X\subset\p^N$ be an irreducible and reduced non-degenerate
variety and  $X^{(h)}$ be the $h$-th symmetric product of $X$.
That is the variety parametrizing unordered sets of $h$ points of $X$. 
Let $U_h^X\subset X^{(h)}$ be the smooth locus, given by sets of $h$
distinct smooth points.
\begin{dfn}
  A point $z\in U^X_h$ represents a
set of $h$ distinct points, say $\{z_1,\ldots, z_h\}$. We say that a point $p\in \p^N$ is in the span of
 $z$, $p\in\langle z\rangle$,  if it is a linear combination of the
 $z_i$.
\end{dfn}
\begin{dfn} The {\it
    abstract $h$-Secant variety} is the irreducible and reduced variety
$$\textit{sec}_{h}(X):=\overline{\{(z,p)\in U_h^X\times\p^N| p\in \Span{z}\}}\subset
X^{(h)}\times\p^N.$$ 

Let $\pi:X^{(h)}\times\p^N\to\p^N$ be the projection onto the second factor. 
The {\it $h$-Secant variety} is
$$\Sec_{h}(X):=\pi(sec_{h}(X))\subset\p^N,$$
and $\pi_h^X:=\pi_{|sec_{h}(X)}:sec_{h}(X)\to\p^N$ is the $h$-secant map of $X$.

The irreducible variety $\textit{sec}_{h}(X)$ has dimension $(hn+h-1)$.
 One says that $X$ is
\textit{$h$-defective}
if $$\dim\Sec_{h}(X)<\min\{\dim\textit{sec}_{h}(X),N\}.$$
For simplicity we will say that $X$ is not defective if it is not
$h$-defective for any $h$.
\end{dfn}

\begin{dfn}
  Let $X\subset\p^N$ be a non-degenerate subvariety. We say that a
  point $p\in\p^N$ has rank $h$ with respect to $X$ if $p\in \langle z\rangle$, for
  some $z\in U_h^X$ and $p\not\in \langle z^\prime\rangle$ for any $z^\prime\in
  U_{h^\prime}^X$, with $h^\prime<h$.
  
We call $g:=:g(X)$ the rank of a general point and we say that
$X\subset\p^N$ is perfect if
$$\frac{N+1}{\dim X+1}\in\N $$
\end{dfn}

\begin{dfn}\label{def:identifiability} A point $p\in\p^N$
  is $h$-identifiable with respect to $X\subset\p^N$ if $p$ is of rank
  $h$ and $(\pi^X_h)^{-1}(p)$ is a single point. The variety $X$ is
  said to be $h$-identifiable if $\pi_h^X$ is a birational map, that
  is the general point of $\Sec_h(X)$ is $h$-identifiable.
  For simplicity we will say that $X\subset\p^N$ is generically identifiable  if the generic
  point of $\p^N$ is $g$-identifiable.
\end{dfn}

\begin{rem}\label{rem:ident_easy}
  Note that $\pi_g^X$ is generically finite if and only if $X$ is
  perfect and not defective.  These are therefore necessary condition
  for identifiability.
\end{rem}

\begin{dfn}
  \label{def:weaklydef} Let $X\subset\p^N$ be a non-degenerate
  variety and $\{x_1,\ldots, x_h\}\subset X$ general points. 
  The variety $X$ is said $h$-weakly defective if the general
  hyperplane singular along $h$ general points is singular along a
  positive dimensional subvariety passing through the points. 
  Let
  $H\in\h(h):=|\I(1)_{x_1^2,\ldots,x_h^2}|$ be a general section, we call
  $\Gamma_h(H)$ its  locus of tangency passing through $x_1,\dots,x_h$. 
\end{dfn}

\begin{dfn}
  For  a linear system $\h$ we set
  $$\Gamma(\h):=\bigcap_{H\in\h} Sing(H) $$
the common singular locus.
\end{dfn}

\begin{rem}
  \label{rem:zeropostive} We want to stress that, by \cite{CC1},  if
  $\Gamma_h(H)$ is zero dimensional in a neighborhood of
  $\{x_1,\ldots,x_h\}$ then $\Gamma_h(H)=\{x_1,\ldots,x_h\}$,
\end{rem}

The notion of tangentially weakly defective
  varieties has been introduced in \cite{CO1}. Here we follow the
  notations of \cite{BBC}.

  For a subset $A=\{x_1,\ldots,x_h\}\subset X$ of general points we set
  $$ M_A := \langle\bigcup_{i}\T_{x_i}X\rangle.$$
By Terracini Lemma  the space $M_A$ is the tangent space to $\Sec_h(X)$
at a general point in $\langle A\rangle$.
\begin{dfn}\label{def: twd}
  The tangential $h$-contact locus $\Gamma_h(A)$ is the closure in $X$ of the union of all the irreducible components which contain at least one point of $A$, of the locus of points of $X$ where $M_A$ is tangent to $X$.
We will write $\gamma_h := \dim \Gamma_h(A)$.
We say that $X$ is $h$-twd ($h$-tangentially weakly defective) if
$\gamma_h > 0$. 
\end{dfn}
\begin{rem}\label{rem:zeropositivetwd}
Note that in general it is difficult to predict the behavior of $\Gamma(\h(h))$ for
non $h$-twd varieties. By definition $\Gamma(\h(h))$ is zero
dimensional in a neighborhood of the assigned singular points but not
much is known about singular components away from these. Our
Proposition~\ref{zero dimensional twd} is a first attempt to study
this problem, under strong hypothesis.  
\end{rem}

For what follows it is useful to introduce also the notion of
tangential projection.
\begin{dfn}
  Let $X\subset\p^N$ be a variety and $A=\{x_1,\ldots,x_h\}\subset X$ a
  set of general points. The $h$-tangential projection (from A) of $X$ is
  $$\tau_h:X\dasharrow\p^M$$
  the linear projection from $M_A$. That is, by Terracini Lemma,   the projection from the
  tangent space of a
  general point $z\in \Span{A}$ of $\Sec_h(X)$ restricted to $X$.
\end{dfn}
\begin{rem}\label{rem:Terracini_tangential}
 By Terracini's Lemma $\tau_h$ is the rational map associated to the
 linear system $\mathcal{H}(h)=|\mathcal{I}_{x_1^2\ldots, x_h^2}(1)|$.
\end{rem}

\section{Properties of contact locus for non twd varieties}

In this section we study properties of the contact loci $\Gamma_{g-1}(H)$ (for  a general $H\in \h(g-1)$)
of projective varieties that are non defective and not $(g-1)$-twd.
In particular in view of applications to Noether--Fano inequalities we
are interested in studying the infinitesimally near singularities of $\h$.




We start recalling 
\cite[Proposition 3.6]{CC1} and its generalization to twd. This
Proposition will be useful to reduce the study of $\Gamma_{g-1}(\h)$
to the special case of $g=2$.
\begin{prop}\label{projection}Let $X\subset\p^N$ be an irreducible and
  reduced non degenerate variety. Assume that $X$ is not $h$-defective
  and $h(\dim X+1)-1< N$. Let $X_s=\tau_s(X)$ be a general tangential
  projection. 
\begin{itemize}
\item[i)] 
$X$ is $h-$weakly
defective if and only if  $X_s$ is $(h-s)-$weakly defective.
\item[ii)] 
$X$ is $h-$twd if and only if  $X_s$ is $(h-s)-$twd.
\end{itemize}
\end{prop}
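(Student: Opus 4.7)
The plan is to extend the strategy of \cite[Proposition 3.6]{CC1}, which proves (i), to the tangential setting of (ii). Write $n=\dim X$, split $A=A_s\cup A'$ with $A_s=\{x_1,\ldots,x_s\}$ and $A'=\{x_{s+1},\ldots,x_h\}$, and set $A'_s=\{\tau_s(x_{s+1}),\ldots,\tau_s(x_h)\}$. Since $X$ is not $h$-defective and $h(n+1)-1<N$, it is in particular not $s$-defective, so by Terracini $\dim M_{A_s}=s(n+1)-1$ and $\tau_s\colon X\dasharrow X_s\subset\p^M$ with $M=N-s(n+1)$ is generically of maximal rank, hence a local isomorphism at each generic $x_j$ with $j>s$. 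Every member of $\h(h)$ is singular at $x_1,\ldots,x_s$ and therefore contains $M_{A_s}$, so descends to a hyperplane on $\p^M$, and singularity at $x_j$ descends to singularity at $\tau_s(x_j)$. This identifies $\h(h)$ with the analogous system $|\I_{\tau_s(x_{s+1})^2,\ldots,\tau_s(x_h)^2}(1)|$ on $\p^M$ associated to $A'_s$.

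Next I would translate the tangency condition. By Terracini, $M_{A'_s}$ is the image of $M_A$ under the projection from $M_{A_s}$. For a generic smooth point $y\in X$ outside the indeterminacy, $d\tau_s$ sends $\T_y X$ onto $\T_{\tau_s(y)}X_s$, so $\T_y X\subset M_A$ if and only if $\T_{\tau_s(y)}X_s\subset M_{A'_s}$. For each $j>s$, since $\tau_s$ is a local isomorphism near $x_j$, the component of $\Gamma_h(A)$ through $x_j$ maps birationally onto the component of $\Gamma_{h-s}(A'_s)$ through $\tau_s(x_j)$, preserving dimension.

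To conclude, I would invoke the genericity of $A$: the components of $\Gamma_h(A)$ through the various $x_i$ are permuted by the monodromy of the $S_h$-action on the family of generic $h$-tuples, so their dimensions all coincide. Taking $i=h>s$, the preceding paragraph shows that $\gamma_h>0$ if and only if the component of $\Gamma_{h-s}(A'_s)$ through $\tau_s(x_h)$ has positive dimension, i.e.\ $X_s$ is $(h-s)$-twd; conversely, any positive-dimensional component of $\Gamma_{h-s}(A'_s)$ through some $\tau_s(x_j)$ lifts through the same local isomorphism to a positive-dimensional component of $\Gamma_h(A)$ through $x_j$.

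The main obstacle is precisely that a component of $\Gamma_h(A)$ through some $x_i$ with $i\le s$ can be contracted (or absorbed into the indeterminacy) by $\tau_s$ and therefore be invisible on $X_s$; one cannot compare dimensions component by component in a naive way. This is what the monodromy/symmetry step is designed to repair: the existence of such a hidden component forces, by the $S_h$-equivariance of the construction in generic configurations, a parallel component through some $x_j$ with $j>s$ that is detected by the projection.
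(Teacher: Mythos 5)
Your argument is correct and is essentially the paper's own route: the paper simply cites \cite[Proposition 3.6]{CC1} for (i) and declares (ii) ``a simple adaptation'', and what you write out --- descending/lifting the tangency condition $\T_yX\subset M_A$ through the projection from $M_{A_s}$ (using $\pi(M_A)=M_{A'_s}$, $\pi^{-1}(M_{A'_s})=M_A$ from non-defectivity, and the local isomorphism of $\tau_s$ near the general $x_j$, $j>s$) --- is precisely that adaptation. Your monodromy step, ensuring that a positive-dimensional component of $\Gamma_h(A)$ through some $x_i$ with $i\le s$ forces one through a point $x_j$ with $j>s$ visible after projection, is the right way to close the only delicate point, and it is the same symmetry argument the paper itself uses later in the proof of Theorem~\ref{main theorem}.
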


\begin{proof} Point i) is \cite[Proposition 3.6]{CC1}. 

Point ii) is a simple adaptation of point i) substituting weakly defectivity with twd.
\end{proof}

For future reference we observe the following fact.

\begin{lem}\label{codimension}
Let $Z\subset\mathbb{P}^{n}$ be a  reduced projective
variety of dimension $\dim(Z)=a$. Then
$\codim|\mathcal{I}_{Z}(1)|\geq
a+1$
and equality is fulfilled only by linear spaces.
\end{lem}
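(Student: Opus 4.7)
The plan is to reduce the problem to the geometry of the linear span $L = \langle Z \rangle \subset \mathbb{P}^n$, since a hyperplane contains $Z$ if and only if it contains $L$. So the linear system $|\mathcal{I}_Z(1)|$ coincides with $|\mathcal{I}_L(1)|$, and I only need to compute the codimension of the latter in $|\mathcal{O}_{\mathbb{P}^n}(1)|$.

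Setting $k = \dim L$, I would invoke the elementary fact that in the dual projective space $|\mathcal{O}_{\mathbb{P}^n}(1)| \cong \check{\mathbb{P}}^n$, the hyperplanes containing a fixed linear subspace of dimension $k$ form a linear subsystem of codimension exactly $k+1$ (this is just the short exact sequence $0 \to \mathcal{I}_L(1) \to \mathcal{O}_{\mathbb{P}^n}(1) \to \mathcal{O}_L(1) \to 0$ together with $h^0(\mathcal{O}_L(1)) = k+1$). Combined with the basic inequality $\dim \langle Z \rangle \geq \dim Z = a$, which holds for any (reduced) subvariety, this gives $\codim |\mathcal{I}_Z(1)| = k+1 \geq a+1$, which is the first assertion.

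For the equality statement, suppose $\codim |\mathcal{I}_Z(1)| = a+1$, i.e.\ $\dim L = a$. If $Z$ were irreducible, then $Z$ would be a closed subvariety of dimension $a$ in the irreducible $a$-dimensional variety $L$, forcing $Z = L$. In the reduced but possibly reducible case, let $Z_1$ be an irreducible component of maximal dimension $a$; the inclusions $\langle Z_1 \rangle \subset \langle Z \rangle = L$ together with $\dim \langle Z_1 \rangle \geq \dim Z_1 = a = \dim L$ force $\langle Z_1 \rangle = L$, and then the irreducible case gives $Z_1 = L$. Every other component $Z_i$ satisfies $Z_i \subset L = Z_1$, so $Z = Z_1 = L$ is a linear space.

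The argument is essentially formal; the only subtle point is the equality discussion in the reducible case, which is handled by isolating a top-dimensional component and running the irreducible argument on it. No genuine obstacle is expected.
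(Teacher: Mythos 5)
Your proof is correct and takes essentially the same route as the paper: both identify $|\mathcal{I}_Z(1)|$ with $|\mathcal{I}_{\langle Z\rangle}(1)|$, note that the latter has codimension $\dim\langle Z\rangle+1$, and settle the equality case via the fact that a variety spanning a linear space of its own dimension must be that linear space. Your argument with a top-dimensional irreducible component merely spells out the elementary fact (for non-linear $Z$ one has $\dim\langle Z\rangle>\dim Z$) that the paper asserts without proof.
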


\begin{proof} If $Z$ is a linear space there is nothing to
  prove. Assume that $Z$ is not a linear space, then
  $\dim\Span{Z}>\dim Z$.
  We have
  $$\codim|\mathcal{I}_{Z}(1)|=\codim|\mathcal{I}_{\Span{Z}}(1)|=\dim\Span{Z}+1>\dim Z+1.$$
\end{proof}

\begin{dfn}
  Let $X\subset\p^N$ be an irreducible and reduced non-degenerate and
  non $h$-defective variety. Let $\{x_1,\ldots,x_h\}\subset X$ be a
  set of general point and $\h(h)=|\I_{x_1^2,\ldots,x_h^2}(1)|$ the linear
  system of hyperplane sections singular in  $\{x_1,\ldots, x_h\}$.
  Set
  $$\mathcal{W}_h:=\{(H,x)|H\in\h(h), x\in \Gamma_h(H)\}\in\h\times X$$
  and  $\pi^h_{1}:\mathcal{W}_h\rightarrow\mathcal{H}(h)$,  $\pi^h_{2}:\mathcal{W}_h\rightarrow X$
the two canonical projections. 
We  denote with $W_h:=\pi^h_{1}(\mathcal{W}_h)\subset\mathcal{H}(h)$.
\end{dfn}

It is clear that $W_s\subset|\I_{x^2_1,\ldots,x^2_{h}}(1)|$ for any
$h<s$. Then we may identify $W_s$ as a subvariety of $W_h$ for any
$h\leq s$. Our next aim is to prove, in some cases, a more precise result.

\begin{prop}\label{dimension of W} Assume that $X$ is perfect and not
  defective with general rank $g$. Set $\h:=\h(g-2)=|\I_{x_1^2,\ldots,x_{g-2}^2}(1)|$
  and assume
  $$\dim(\Gamma_{g-1}(H))=a,$$
  for $H\in\h(g-1)$.
  Then we have $\codim_{\h(g-2)}(W_{g-1})=a+1$.
\end{prop}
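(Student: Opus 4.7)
The plan is to compute $\codim_{\h(g-2)}W_{g-1}$ by a dimension count on an incidence correspondence. Writing $\h:=\h(g-2)$, I would consider
$$
\mathcal{I} \;=\; \bigl\{(H,x)\in\h\times X \,:\, H \text{ is singular at } x\bigr\}
$$
with projections $p_1:\mathcal{I}\to\h$ and $p_2:\mathcal{I}\to X$. Let $\mathcal{I}'$ be the irreducible component of $\mathcal{I}$ whose second projection dominates $X$; then $W_{g-1}$ is identified with $\overline{p_1(\mathcal{I}')}$, since its general element is a hyperplane in $\h$ that is singular at the base points $x_1,\ldots,x_{g-2}$ and at some additional point of $X$.

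First I would fibre $\mathcal{I}'$ over $X$. For a general $x\in X$ the fibre $p_2^{-1}(x)$ is the linear subsystem $|\I_{x_1^2,\ldots,x_{g-2}^2,x^2}(1)|\subset\h$, i.e.\ a realisation of $\h(g-1)$ with $x_{g-1}:=x$. As $X$ is not $(g-1)$-defective, imposing the extra double point at the generic $x$ drops the dimension by exactly $n+1$, where $n=\dim X$, giving
$$
\dim\mathcal{I}' \;=\; n + \dim\h - (n+1) \;=\; \dim\h - 1.
$$
Next I would fibre $\mathcal{I}'$ over $W_{g-1}$ via $p_1$. For a general $(H,x)\in\mathcal{I}'$ the point $x$ is generic in $X$, so after relabelling $x=x_{g-1}$ the hyperplane $H$ is a general element of $\h(g-1)$. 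The component of $p_1^{-1}(H)$ lying in $\mathcal{I}'$ is then, by construction of $\mathcal{I}'$ and the very definition of the tangential contact locus, the component of the common singular locus of $H$ and $X$ through $x=x_{g-1}$, that is, $\Gamma_{g-1}(H)$. By hypothesis this has dimension $a$, so
$$
\dim W_{g-1} \;=\; \dim\mathcal{I}' - a \;=\; \dim\h - 1 - a,
$$
which yields $\codim_{\h} W_{g-1}=a+1$.

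The main obstacle is pinning down the correct component $\mathcal{I}'$ and verifying that the general fibre of $p_1|_{\mathcal{I}'}$ has dimension exactly $a$. A general $H\in W_{g-1}$ may pick up additional tangency components of $H\cap X$ away from the generic singular point $x$, but those either belong to a different irreducible component of $\mathcal{I}$ or have strictly smaller dimension, and so do not inflate the generic fibre of $p_1|_{\mathcal{I}'}$. A cleaner alternative would be to invoke Proposition~\ref{projection}(ii) iteratively: after $g-2$ tangential projections, one replaces $X$ by $X_{g-2}=\tau_{g-2}(X)\subset\p^{2n+1}$, a nondegenerate variety of dimension $n$ and general rank $2$ with the same contact dimension $a$; then $W_{g-1}$ pulls back to (the closure of) the dual variety of $X_{g-2}$, whose codimension in $|\oo_{\p^{2n+1}}(1)|$ is $a+1$ by the classical contact-dimension formula.
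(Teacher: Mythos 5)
Your proposal is essentially the paper's own proof: the paper likewise forms the incidence variety $\mathcal{W}_{g-1}$ of pairs $(H,x)$, computes its dimension as $2n=\dim\h(g-2)-1$ by fibering over $X$ and using non-defectivity to see that the extra double point imposes exactly $n+1$ conditions, and then subtracts the generic fibre dimension $a=\dim\Gamma_{g-1}(H)$ of the projection to $\h(g-2)$ to get $\codim_{\h(g-2)}(W_{g-1})=a+1$. Your added remarks on pinning down the right component and the alternative route via tangential projections are extras, but the core parameter count coincides with the paper's.
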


\begin{proof} The variety $X$ is not defective, then $\dim(\h(g-2))=2n+1$.
  By a parameter count we have $\dim\mathcal{W}_{g-1}=2n$. 


By definition for a general $[H]\in W_{g-1}$ we have
$$\dim(\pi_{1}^{-1}(H))=\dim\{x\in X|x\in \Gamma_{g-1}(H)\})=\dim\Gamma_{g-1}(H)$$
therefore we conclude that
$$\dim(W_{g-1})=\dim(\mathcal{W}_{g-1})-\dim(\pi_{1}^{-1}(H))=2n-a$$
yielding $\codim_{\h(g-2)}(W_{g-1})=a+1$.
\end{proof}

The following result is already implicitly used in \cite{CC1} but we
state it as a Proposition for the reader convenience.

 \begin{prop}\label{contact locus linear space}
Let $X\subset\mathbb{P}^{2\dim X+1}$ be an irreducible, reduced non-degenerate
variety. Assume that $X$ is not defective and not $1-$twd.
Then for a general tangent hyperplane $H\in \h(1)$, the tangential
locus $\Gamma_1(H)$ is a
linear space. In particular, under the hypothesis,  also $\Gamma(\h(1))$
is a linear space.
\end{prop}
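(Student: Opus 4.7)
The plan is to prove this using the classical reflexivity (biduality) theorem for dual varieties in characteristic zero. Setting $n = \dim X$ and $N = 2n+1$, a general $H \in \h(1)$ is a hyperplane containing $T_{x_1}X$, hence a tangent hyperplane of $X$ at $x_1$, equivalently a point of the dual variety $X^* \subset (\p^{N})^*$. By definition $\Gamma_1(H)$ is the component through $x_1$ of the full contact locus $\{y \in X : T_yX \subset H\}$.

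I would first introduce the conormal variety
$$\mathcal{C}(X) := \overline{\{(y,H) \in X^{\mathrm{sm}} \times (\p^{N})^* : T_yX \subset H\}} \subset X \times (\p^N)^*,$$
of dimension $N-1 = 2n$, together with its two projections: $p_X : \mathcal{C}(X) \to X$, whose fiber over $y$ is $\h(1)_y \cong \p^n$, and $p_* : \mathcal{C}(X) \to X^*$, whose fiber over $H$ is the full contact locus. Since $x_1$ is a general point of $X$, for $H$ general in $\h(1)$ the pair $(x_1,H)$ is generic in $\mathcal{C}(X)$, so $H$ is generic in $X^*$. I would then invoke the Biduality Theorem: in characteristic zero $X^{**} = X$, and $\mathcal{C}(X) = \mathcal{C}(X^*)$ with the roles of the two projections exchanged. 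Via this identification, a generic fiber of $p_*$ becomes a generic fiber of the analog of $p_X$ for $X^*$, which is a linear subspace of $\p^{N}$ by construction of the conormal variety. Hence $\Gamma_1(H)$, as the component of this fiber through $x_1$, is a linear space.

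For the ``in particular'' part, unwinding definitions gives
$$\Gamma(\h(1)) = \bigcap_{H \in \h(1)} \sing(H|_X) = \{y \in X : T_yX \subset H \text{ for every } H \in \h(1)\} = \{y \in X: T_yX \subset T_{x_1}X\},$$
which by the non-twd hypothesis equals $\Gamma_1(\{x_1\}) = \{x_1\}$, a point, hence trivially a linear space.

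The main obstacle is making the biduality step precise in the present generality: one needs to check that the non-defectiveness and non-twd hypotheses ensure the conormal variety and its two projections behave well enough, and in particular that a generic $H \in \h(1)$ maps to a generic smooth point of $X^*$ so that the component through $x_1$ of the biduality fiber is really $\Gamma_1(H)$. Non-defectiveness controls the tangential projection $\tau_1$ and the dimension of $X^*$, while non-twd ensures the Gauss map is generically unramified at $x_1$, so the component of the contact locus through $x_1$ matches the biduality fiber and is therefore linear.
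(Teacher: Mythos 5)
Your proof of the main assertion is correct, but it follows a genuinely different route from the paper's. The paper stays inside the Chiantini--Ciliberto machinery: setting $a=\dim\Gamma_1(H)$, it shows $\codim_{\h}(W_1)=a+1$ for the family $W_1\subset|\o(1)|$ of singular hyperplane sections (Proposition~\ref{dimension of W}), invokes the infinitesimal Bertini theorem of \cite{CC1} to get $\T_{[H]}W_1\subset\h(-\sing(H))$, and concludes with Lemma~\ref{codimension}, which forces a variety whose linear system of hyperplanes has codimension $\dim+1$ to be a linear space. You instead reduce to the reflexivity theorem: since the conormal variety is irreducible, a general $H\in\h(1)$ at a general point $x_1$ is a general point of $X^*$, and by biduality the contact locus of such an $H$ is the linear space dual to $\T_{[H]}X^*$. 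This is a sound classical argument, provided you carry out the check you yourself flag, namely that the general fibre of the projection of the conormal variety to $X^*$ coincides with the closure of the smooth tangency locus; note that it even shows that linearity of $\Gamma_1(H)$ needs neither non-defectivity, nor non-$1$-twd, nor the assumption $N=2\dim X+1$, whereas the paper's argument has the advantage of using only the tools of \cite{CC1} that it needs elsewhere and of identifying $\dim\Gamma_1(H)+1$ with the codimension of the dual variety.

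The ``in particular'' statement is where your proposal has a genuine gap. First, the identification $\Gamma(\h(1))=\{y\in X:\T_yX\subset\T_{x_1}X\}$ is only valid along the smooth locus: $X$ is merely reduced and irreducible, and every point of $\sing(X)\cap\T_{x_1}X$ lies in $\sing(H)$ for all $H\in\h(1)$ with no tangency condition. More seriously, the step ``which by the non-twd hypothesis equals $\Gamma_1(\{x_1\})=\{x_1\}$'' does not follow: by Definition~\ref{def: twd} the non-$1$-twd hypothesis only controls the irreducible components of the tangency locus of $\T_{x_1}X$ that pass through $x_1$, and says nothing about components (possibly positive-dimensional, or extra isolated points) away from $x_1$. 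This is exactly the difficulty the paper isolates in Remark~\ref{rem:zeropositivetwd}, and it is the reason the Proposition claims only that $\Gamma(\h(1))$ is a linear space: it is this linearity (hence connectedness through $x_1$), combined with non-twd, that later yields $\Gamma(\h')=\tau_{g-2}(x_1)$ in Claim~\ref{cla:emty} and motivates Proposition~\ref{zero dimensional twd}. Your argument asserts directly the stronger conclusion $\Gamma(\h(1))=\{x_1\}$, which is precisely what this chain of results is designed to establish; to repair it you would need an argument excluding singular or tangency components of $\h(1)$ away from $x_1$, for instance by showing that the common singular locus is contained in the (irreducible, linear) contact locus of a single general $H$, which is not automatic.
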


\begin{proof} If $X$ is not $1$-weakly defective, by Remark
   \ref{rem:zeropostive},  $\Gamma_{1}(H)$ is a point.
   Assume that $X$ is $1$-weakly defective and $\dim \Gamma_1(H)=a$.
 Let $x\in X$ be a general point and $H \in \h(1)$ a general tangent section
 in $x$.
 Let us consider the variety
 $$W_1\subset|\o(1)|=:\h$$
parametrizing singular hyperplane sections.
Proposition \ref{dimension of W} yields $\codim_{\mathcal{H}}(W_1)=a+1$
and so $\codim(\mathbb{T}_{[H]}W_1)=a+1$.
On the other hand, by the infinitesimal Bertini's theorem \cite[Thm
2.2]{CC1},
we have
$$\mathbb{T}_{[H]}W_1\subset\text{\ensuremath{\mathcal{H}}}(-Sing(H))$$
and so $\codim_{\mathcal{H}}(\mathcal{H}(-\Gamma_1(H)))\leq a+1$.

Hence we conclude by Proposition\ref{codimension} that $\Gamma_1(H)$ is a linear space.
\end{proof}

\begin{lem}\label{lemma hyp section weakly}
Let $X\subset\mathbb{P}^{N}$ be an irreducible, reduced non-degenerate
projective variety. Assume that $X$ is $1-$weakly defective with
$\dim(\Gamma_1(H))=a$, for $H\in \h(1)$ a general tangent hyperplane. Then a general hyperplane
section $X'$ of $X$  satisfies
$\dim(\Gamma_1(H'))= a-1$, for $H'$ a general tangent hyperplane to $X'$.
\end{lem}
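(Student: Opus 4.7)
The plan is to prove both inequalities $\dim\Gamma_1(H')\ge a-1$ and $\dim\Gamma_1(H')\le a-1$ by lifting a general tangent hyperplane $H'$ of $X'$ to a tangent hyperplane $H$ of $X$ and comparing the contact loci.

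Fix a general point $x\in X$, a general tangent hyperplane $H\in\h(1)$ at $x$ (so $\dim\Gamma_1(H)=a$ by hypothesis), and a general hyperplane $H_0$ through $x$; set $X'=X\cap H_0$ and $H'=H\cap H_0\subset H_0$. The restriction map $H\mapsto H\cap H_0$ is a linear isomorphism from $\{H\supset\T_xX\}$ onto $\{H'\subset H_0:H'\supset\T_xX'\}$ (both projective spaces of dimension $N-n-1$), so as $(H,H_0)$ vary generically the resulting pair $(X',H')$ covers a dense open of the parameter space of a general hyperplane section of $X$ together with a general tangent hyperplane at a general point. For the lower bound, if $y\in\Gamma_1(H)\cap H_0$ then $\T_yX\subset H$ and $y\in H_0$ imply $\T_yX'=\T_yX\cap H_0\subset H'$, so $\Gamma_1(H)\cap H_0$ is contained in the component through $x$ of the tangency locus of $H'$ on $X'$; for generic $H_0$ this intersection has dimension $a-1$, yielding $\dim\Gamma_1(H')\ge a-1$.

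For the upper bound I would introduce the incidence
$$Z:=\{(y,H_0)\in X\times(\mathbb{P}^N)^*\mid x,y\in H_0,\ \T_yX\cap H_0\subset H\}$$
with $x$ and $H$ fixed as above, and compute $\dim Z$ by stratifying on whether $y\in\Gamma_1(H)$. Over $y\in\Gamma_1(H)$ the tangency condition is automatic and the fiber consists of hyperplanes through $x$ and $y$, of dimension $N-1$ if $y=x$ and $N-2$ otherwise; this yields a component of dimension $N+a-2$. Over $y\notin\Gamma_1(H)$ the condition $\T_yX\cap H_0\subset H$ together with $\T_yX\not\subset H$ forces $\T_yX\cap H_0=\T_yX\cap H$, and combined with $y\in H_0$ this pins $H_0$ to the linear subspace $\{H_0:H_0\supset\T_yX\}$ of codimension $n+1$, so this component has dimension at most $N-2$. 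Hence $\dim Z=N+a-2$, and the projection $Z\to\{H_0\ni x\}\cong\mathbb{P}^{N-1}$ is dominant (since $x$ lies in every fiber), so the generic fiber has dimension $(N+a-2)-(N-1)=a-1$. This fiber is exactly the tangency locus of $H'$ on $X'$, which contains $\Gamma_1(H')$; thus $\dim\Gamma_1(H')\le a-1$.

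The main obstacle is the dimension count for $Z$: one must verify that the stratum over $X\setminus\Gamma_1(H)$ contributes strictly less than $N+a-2$ to $\dim Z$, which reduces to the tangent-space computation sketched above showing that such a $y$ forces $H_0\supset\T_yX$. Combining the two bounds yields $\dim\Gamma_1(H')=a-1$.
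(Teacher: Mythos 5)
Your construction coincides with the paper's: fix a general point $x$, a general tangent hyperplane $H$ at $x$ and a general hyperplane $H_0$ through $x$, set $X'=X\cap H_0$, $H'=H\cap H_0$, and compare contact loci. The lower bound via $\Gamma_1(H)\cap H_0$ and the genericity bookkeeping (the restriction isomorphism on tangent hyperplanes and the incidence argument making $(x,H')$ general for $X'$) are fine, and are exactly what the paper does with a short Bertini argument. The added value of your write-up is that you try to prove the upper bound $\dim\Gamma_1(H')\le a-1$ explicitly, which the paper leaves essentially implicit; unfortunately that is where the gap sits.

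Concretely: $\Gamma_1(H)$ is by definition only the union of the components through $x$ of the contact locus $T=\{y\in X_{\rm sm}:\ \T_yX\subset H\}$, so $y\notin\Gamma_1(H)$ does \emph{not} imply $\T_yX\not\subset H$. Your second stratum therefore ignores possible components of $T$ away from $x$, over which the tangency condition is again automatic, and the claim $\dim Z=N+a-2$ is unjustified as written. Moreover the fibre description in that stratum is off: for $\T_yX\not\subset H$ the condition $\T_yX\cap H_0\subset H$ forces $y\in H$ and $H_0\supset\langle x,\T_yX\cap H\rangle$, not $H_0\supset\T_yX$ (the latter would give $\T_yX\subset H$, i.e.\ an empty stratum); the correct count, base $y\in (X\cap H)\setminus T$ of dimension at most $n-1$ and fibre of dimension $N-n-1$ with $n=\dim X$, still yields at most $N-2$. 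The repair is short: stratify by $\overline{T}$ versus its complement; since the complementary stratum has dimension at most $N-2$, a general $H_0$ through $x$ misses it, so the whole tangency locus of $H'$ on $X'$ is contained in $\overline{T}\cap H_0$; any component of it containing $x$, being irreducible, cannot lie in the components of $\overline{T}$ avoiding $x$, hence lies in $\Gamma_1(H)\cap H_0$, which has dimension $a-1$ for general $H_0$. With this correction your argument is sound and amounts to a detailed version of the paper's terse proof.
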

\begin{proof}
Let $x\in X$ be a general point,  $H\in|\mathcal{I}_{x^{2}}(1)|$
 a general hyperplane section singular at $x$ and  $L\in|\mathcal{I}_{x}(1)|$
 a general hyperplane section passing through $x$.
 The divisor $L$ is smooth in a neighborhood of $x$ and
 $\Bs|\mathcal{I}_{x}(1)|=\{x\}$.  Hence, by Bertini's theorem,
 $$\dim(Sing(H)\cap L))=\dim\Gamma_1(H)-1=a-1$$
To conclude observe that $H_{|L}$ is a general tangent section of $L$ at $x$. 
\end{proof}

Let $(z_1,\dots,z_n)$ be a system of local coordinates at the point
$(x\in X)\cong ((0,\ldots,0)\in\C^n)$.
Every divisor $H\in |\mathcal{I}_{x^2}(1)|$ can be expressed locally as $$H=(Q_{H}(z_{1},\ldots,z_{n})+\sum_{d\geq3}F_{d}(z_{1},\ldots,z_{n})=0)$$ where $Q_{H}(z_1,\dots,z_n)\in \mathbb{C}[z_1,\dots,z_n]_{2}$ is a Quadric and $F_{d}$ are homogeneous polynomials of degree at least
$3$.
The rank of the double point $x\in H$ is by definition the rank of the Quadric $Q_H$.
The singular locus $\mathcal{A}=Sing(Q_H)$ is a linear space $\mathcal{A}\subset \mathbb{C}^n$ of dimension $\dim(\mathcal{A})=\dim(X)-rank(Q_H)$. It is called the asymptotic space of $H$ at the point $x$.  
Let $\nu:X'\rightarrow X$ be the blow up of $X$ at $x$ with
exceptional divisor $E$. Under the identification $E=\mathbb{P}((T_x
X)^{*})=\mathbb{P}^{n-1}$ we have that $\nu_{*}^{-1}(H)\cap
E=\mathbb{P}(Q_{H})$ and $Sing(\nu_{*}^{-1}(H))\cap E\subseteq\mathbb{P}(\mathcal{A})$. 
Note further that  to every point $y\in E$ we can associate uniquely a line $l_y \in T_x X$ corresponding to the tangent direction represented by $y$.

With this notation in mind we are going to improve Proposition \ref{contact locus linear space}.

\begin{prop}\label{Contact locus linear}
Let $X\subset\mathbb{P}^{2 \dim X+1}$ be an irreducible, reduced non-degenerate
projective variety. If $X$ is not defective
$\mathbb{P}(Sing(Q_{H}))=\nu_{*}^{-1}(\Gamma_1(H))\cap E$.
\end{prop}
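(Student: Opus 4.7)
The plan is to establish the two inclusions $\nu_{*}^{-1}(\Gamma_1(H))\cap E\subseteq\mathbb{P}(Sing(Q_H))$ and $\mathbb{P}(Sing(Q_H))\subseteq\nu_{*}^{-1}(\Gamma_1(H))\cap E$ separately, working in the local analytic coordinates $(z_1,\ldots,z_n)$ at $x$ introduced just before the statement. Let $f=Q_H+F_3+F_4+\cdots$ be the local equation of $H|_X$ and set $\mathcal{A}:=Sing(Q_H)\subseteq T_xX$ for the asymptotic space, so that the two objects of interest live naturally in the exceptional divisor $E=\mathbb{P}((T_xX)^{*})$.

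The first inclusion follows from a direct gradient computation. The scheme $Sing(H|_X)$ is cut out near $x$ by the Jacobian ideal $(\partial f/\partial z_i)_{i=1}^n$; its generators have linear parts equal to the rows of $M_{Q_H}z$, so the tangent cone of $Sing(H|_X)$ at $x$ is contained in $\{M_{Q_H}z=0\}=\mathcal{A}$. Since $\Gamma_1(H)\subseteq Sing(H|_X)$ by definition, passing to strict transforms and intersecting with $E$ yields $\nu_{*}^{-1}(\Gamma_1(H))\cap E\subseteq\mathbb{P}(\mathcal{A})$.

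For the reverse inclusion I would first invoke Proposition~\ref{contact locus linear space}, which tells us that $\Gamma_1(H)$ is a linear subspace of $X$ of dimension $a$. Thus $\nu_{*}^{-1}(\Gamma_1(H))\cap E=\mathbb{P}(T_x\Gamma_1(H))$ is a linear subspace of $E$ of dimension $a-1$, already contained in the linear subspace $\mathbb{P}(\mathcal{A})$ by the first step. It remains to match dimensions, i.e.\ to show $a=\dim\mathcal{A}$. For this I would push the codimension chain used in the proof of Proposition~\ref{contact locus linear space} one step further: the combination of Proposition~\ref{dimension of W}, the infinitesimal Bertini theorem of \cite{CC1}, and Lemma~\ref{codimension} forces $Sing(H|_X)$ to coincide set-theoretically with the linear variety $\Gamma_1(H)$ of dimension $a$. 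Hence the Jacobian ideal is a (radically) reduced ideal cutting out an $a$-dimensional linear subspace near $x$, so its initial form must generate the defining ideal of $\mathcal{A}$, forcing $T_x\Gamma_1(H)=\mathcal{A}$.

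The main technical obstacle is precisely this dimension matching $a=\dim\mathcal{A}$. The forward inclusion is a routine Taylor computation, but the reverse requires controlling the higher-order terms $F_3,F_4,\ldots$ of the Jacobian ideal to rule out the possibility that they further cut down the linear space $\mathcal{A}$ to a proper subspace $T_x\Gamma_1(H)$. The rigidity provided by Proposition~\ref{dimension of W} together with the infinitesimal Bertini theorem, combined with the non-defectiveness hypothesis and the ambient dimension $2\dim X+1$, is exactly what prevents such a drop for a general tangent hyperplane $H$.
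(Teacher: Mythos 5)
Your first inclusion is fine and is the same routine computation the paper uses (tangent cone of $Sing(H)$ at $x$ sits inside $\mathcal{A}=Sing(Q_H)$, hence $\nu_*^{-1}(\Gamma_1(H))\cap E\subseteq\mathbb{P}(\mathcal{A})$). The gap is in the reverse inclusion, i.e.\ in the dimension matching $\dim\mathcal{A}=a$. The step ``the Jacobian ideal cuts out the $a$-dimensional linear space $\Gamma_1(H)$, so its initial forms must generate the defining ideal of $\mathcal{A}$'' is a non sequitur: the ideal generated by the initial forms of the generators $\partial f/\partial z_i$ is exactly $I(\mathcal{A})$, and it is \emph{contained} in the initial ideal of the Jacobian ideal, typically strictly; so knowing $Sing(H)=\Gamma_1(H)$ set-theoretically only gives $\dim C_xSing(H)=a\le\dim\mathcal{A}$, which is the easy inclusion again, and puts no upper bound on $\dim\mathcal{A}$. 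A hypersurface singularity can have a very small singular locus and still a very degenerate quadratic part: $f=z_1^2+z_2^3+z_3^3$ has an isolated singularity at the origin while $Q=z_1^2$ has corank $2$, so $\mathbb{P}(\mathcal{A})$ is a line although the strict transform of the singular locus misses $E$ entirely. Consequently no argument that only sees the set (or even the reduced scheme) $Sing(H)$ --- which is all that the chain Proposition~\ref{dimension of W} + infinitesimal Bertini + Lemma~\ref{codimension} controls --- can bound the corank of $Q_H$; the generality of the tangent hyperplane must enter in a finer way.

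This finer input is precisely what the paper's proof supplies and your proposal omits. The paper first handles the case $\dim\Gamma_1(H)=0$ via \cite[Theorem 1.4]{CC1}: for a non $1$-weakly defective variety the general tangent hyperplane section has an \emph{ordinary} double point, i.e.\ $Q_H$ has maximal rank --- this is a substantive theorem about general tangent hyperplanes, not a formal statement about Jacobian ideals. For $a>0$ it cuts with $a$ general hyperplane sections through $x$, using Lemma~\ref{lemma hyp section weakly} to drop $\dim\Gamma_1$ by one at each step, so that the linear section is not $1$-weakly defective; the maximal rank of the restricted quadric then bounds $rank(Q_H)$ from below, while the easy inclusion bounds it from above, and together with the linearity of $\Gamma_1(H)$ from Proposition~\ref{contact locus linear space} (which you do use) the two linear subspaces of $E$ must coincide. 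Without the ordinary-double-point theorem and the hyperplane-section reduction (or some equivalent genuinely geometric use of the generality of $H$), your dimension matching does not go through.
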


\begin{proof}
Let $H\in\h(1)$ be a generic
hyperplane section singular at $x$. If $\dim(\Gamma_1(H))=0$, by
\cite[Theorem 1.4] {CC1}, $x$ is an ordinary double point of
$H$. Thus  $Q_{H}$ is a Quadric
of maximal rank.

Assume $\dim(\Gamma_1(H))=a>0$. By Proposition~\ref{contact locus
  linear space} it is enough to prove that $rank (Q_H)=\dim X-a+1$.
Let $\nu:X'\rightarrow X$ be the blow up of $X$
at the general point $x\in X$, with exceptional divisor $E$, and  $H'=\nu^{-1}_{*}(H)$ the strict
transform of $H$. We have
$$\nu_{*}^{-1}(\Gamma_1(H))\cap E\subseteq Sing(H')$$
We already observed that $Sing(H')\cap E\subseteq \mathbb{P}(Sing(Q_{H}))$ hence 
$$\nu_{*}^{-1}(\Gamma_1(H))\cap E\subseteq  \mathbb{P}(Sing(Q_{H})).$$

This leads to
$$rank(Q_{H})\leq \dim(X)-a+1.$$
Let $H_{1},\ldots,H_{a}\in\h(1)$ be general sections. Then Lemma \ref{lemma hyp section weakly} yields that $X^{a}:=H_{1}\cap\ldots\cap H_{a}$ is not
$1-$weakly defective. Hence, by the first part of the proof, we
conclude
$$rank(Q_{H})\geq \dim(X)-a+1$$
and finish the proof.
\end{proof}

We take the opportunity to stress a property of  $\Gamma(\h(g-1))$  for non twd
varieties, recall Remark~\ref{rem:zeropositivetwd}.

\begin{prop}\label{zero dimensional twd}
  Let $X\subset\p^N$ be  a non defective, perfect, irreducible, reduced and non-degenerate
  variety with general rank $g$. Assume that $X$ is not $(g-1)$-twd.
  Then $\Span{\T_{x_1}X,\ldots,\T_{x_{g-1}}X}$ is tangent only along a
  zero dimensional scheme.
\end{prop}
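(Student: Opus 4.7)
The plan is to reduce to the case $g=2$ by means of a tangential projection, then apply Proposition~\ref{contact locus linear space} to the resulting variety.

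Set $A':=\{x_1,\dots,x_{g-2}\}$ and consider $\tau:=\tau_{g-2}:X\dashrightarrow X':=\tau_{g-2}(X)\subset\p^{2n+1}$, where $n=\dim X$. Since $X$ is non-defective and perfect of rank $g$, $X'$ has dimension $n$, is non-defective, and is perfect of rank $2$. By Proposition~\ref{projection}(ii) applied with $h=g-1$ and $s=g-2$, the hypothesis that $X$ is not $(g-1)$-twd implies that $X'$ is not $1$-twd. Setting $x':=\tau(x_{g-1})$, a general point of $X'$, the statement for $X$ reduces to showing that the tangency locus $T':=\{y\in X':\T_yX'\subseteq\T_{x'}X'\}$ is zero-dimensional.

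For this $g=2$ auxiliary variety, the intersection $\bigcap_{H\in\h(1)}H$ equals $\T_{x'}X'$, so $T'$ coincides with $\Gamma(\h(1))$. By Proposition~\ref{contact locus linear space}, $\Gamma(\h(1))$ is a linear space, hence irreducible; since it contains $x'$, it is the unique irreducible component of $T'$ through $x'$. The non-$1$-twd hypothesis on $X'$ says precisely that this component has dimension $0$, whence $T'=\Gamma(\h(1))=\{x'\}$.

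To transfer this conclusion back to $X$, take $y\in T:=\{y\in X:\T_yX\subseteq M_A\}$ with $y\notin M_{A'}$. Projecting the chain $\T_yX\subseteq M_A=\langle M_{A'},\T_{x_{g-1}}X\rangle$ from $M_{A'}$ yields $\tau(\T_yX)\subseteq\tau(\T_{x_{g-1}}X)=\T_{x'}X'$, and combined with the standard inclusion $\T_{\tau(y)}X'\subseteq\tau(\T_yX)$ this forces $\tau(y)\in T'=\{x'\}$, so $\tau(y)=x'$. Since $\tau$ is generically finite (non-defectiveness of $X$) and $x'$ is general, the fiber $\tau^{-1}(x')$ is finite, hence $T\setminus M_{A'}$ is finite. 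The residual part $T\cap M_{A'}$ is contained in $M_{A'}\cap X$, whose expected dimension $n+\dim M_{A'}-N=-n-2$ is negative; for generic $A'$ this intersection reduces set-theoretically to $A'$, a finite set. Therefore $T$ is zero-dimensional.

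The main technical obstacle lies in this final transfer step: controlling the tangent-space behaviour under the rational projection $\tau$ and verifying that the indeterminacy locus of $\tau$ on $X$ collapses set-theoretically to the finite set $A'$; both ultimately rely on the genericity of $A$ and on the non-defectiveness of $X$.
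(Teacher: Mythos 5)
Your overall strategy---reducing to the case $g=2$ by a general tangential projection, invoking Proposition~\ref{projection} to preserve the non twd hypothesis and Proposition~\ref{contact locus linear space} to control the contact locus downstairs, then using generic finiteness of the projection---is the same as the paper's, and the downstairs part of your argument is essentially fine. The genuine gap is in the transfer step. The inclusion you call ``standard'', $\T_{\tau(y)}X'\subseteq\tau(\T_yX)$, goes the wrong way: for any morphism the image of a tangent space lies inside the tangent space of the image, so what holds in general is $\tau(\T_yX)\subseteq\T_{\tau(y)}X'$, with equality only when $\tau$ is an immersion at $y$ and $X'$ is smooth of the right dimension at $\tau(y)$. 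Neither can be assumed here, because $y$ runs over the tangency locus and is not a general point of $X$: for instance $\T_yX$ may meet the centre $M_{A'}$, in which case $\tau(\T_yX)$ drops dimension (project a conic from a point on one of its tangent lines to see the phenomenon), and $X'$ may be singular at $\tau(y)$. Hence from $\tau(\T_yX)\subseteq\T_{x'}X'$ you cannot deduce $\T_{\tau(y)}X'\subseteq\T_{x'}X'$, i.e. $\tau(y)\in T'$, and the key assertion $\tau(T\setminus M_{A'})=\{x'\}$ is not established. The paper avoids tangent spaces of the (possibly singular) image and argues with the projected linear system instead: $\h(g-1)$ is the pullback of the system of hyperplanes tangent at the image point, each irreducible component $W$ of the tangency locus satisfies $\tau_{g-2}(W)\subseteq\Gamma_1(H)\cap\T_{\tau_{g-2}(x_1)}X_{g-2}$, and this intersection is a single point by Proposition~\ref{contact locus linear space} together with non $1$-twd; finiteness of the fibre over that general point then bounds $\dim W$.

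The residual case $T\cap M_{A'}$ is a second gap. The claim that $M_{A'}\cap X$ has negative expected dimension and therefore reduces set-theoretically to $A'$ for generic $A'$ is unjustified and false in general: $M_{A'}$ is not a generic linear space of its dimension, it contains the tangent spaces $\T_{x_i}X$, and $\T_{x}X\cap X$ can be positive dimensional (for a Segre, or a Segre--Veronese with some $d_i=1$, the tangent space at a point contains the rulings through that point), so $M_{A'}\cap X$ need not be finite. The paper needs no such dichotomy: it runs the projection argument on every irreducible component of the tangency locus and contracts each one to the single point $\tau_{g-2}(x_1)$, hence into a finite fibre. If you keep your splitting, components of $T$ contained in $M_{A'}$ require a genuinely different treatment---for example varying which $g-2$ of the points $x_1,\ldots,x_{g-1}$ serve as centre and exploiting their generality---rather than an expected-dimension count.
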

\begin{proof}
  Let $W\subset
  \Span{\T_{x_1}X,\ldots,\T_{x_{g-1}}X}\cap X$ be an irreducible component where
  $\Span{\T_{x_1}X,\ldots,\T_{x_{g-1}}X}$ is tangent to $X$.
  By Proposition~\ref{projection} we have that $X_{g-2}:=\tau_{g-2}(X)$ is not $1$-twd and not defective, where $\tau_{g-2}$ is the linear projection from $\Span{\T_{x_{2}}X,\dots,\T_{x_{g-1}}X}$. 
\begin{claim}
$\tau_{g-2}(W)=\tau_{g-2}(x_1)$
\end{claim}
\begin{proof} Let $y=\tau_{g-2}(x_1)$ and $H\in|\I_{y^2}(1)|$ a general
    tangent hyperplane section.
By Proposition~\ref{projection} $X_{g-2}$ is not $1$-twd and by
Proposition~\ref{contact locus linear space} $\Gamma_1(H)$ is a linear
space, therefore
$$\Gamma_1(H)\cap \T_yX_{g-2}=y.$$
On the other hand, by construction, we have
$$\tau_{g-2}(W)\subset\Gamma_1(H),$$
and this proves the claim.
\end{proof}  

The variety $X$ is not defective and $y=\tau_{g-2}(x_1)$ is a general
point of $X_{g-2}$. Therefore $\tau_{g-2}^{-1}(y)$ is a finite scheme
and we conclude by the Claim that $W$ is $0$-dimensional.
\end{proof}

\begin{remark}
It would be very interesting to understand if the result in
Proposition~\ref{zero dimensional twd} is true for smaller values of
the rank. Unfortunately our proof is
based on Proposition~\ref{contact locus linear space} and cannot be
extended in this direction.
\end{remark}

The following is the main result  of this section.

\begin{thm}\label{main theorem}
Let $X\subseteq\mathbb{P}^{N}$ be a projective irreducible,reduced and non-degenerate
variety of general rank $g$. Let $\{x_{1},...,x_{g-1}\}$ be general
points on $X$ and $\h=\h(g-1)$. Assume that:
\begin{itemize}
\item[-] $X$ is perfect and non defective
\item[-] $X$ is not $(g-1)$-twd
\end{itemize}
Then there is a variety $Y$ and a birational map $\nu:Y\to X$ with the
following property: for any $\epsilon>0$ there is a ${\mathbb Q}$-divisor $D$,
with $D\equiv \nu^{-1}_*\h$ such that for any point $y\in Y$
$$\mult_yD<1+\epsilon.$$
\end{thm}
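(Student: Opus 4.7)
The plan is to take $\nu\colon Y \to X$ to be the blow-up at $\{x_1,\ldots,x_{g-1}\}$, with exceptional divisors $E_1,\ldots,E_{g-1}$, and to construct $D$ as a generic average
\[
D_k \;=\; \frac{1}{k}\sum_{i=1}^{k} \nu_*^{-1}H_i
\]
of strict transforms of $k$ general members $H_i \in \h$. Since every $H \in \h$ has an ordinary double point of multiplicity exactly $2$ at each $x_j$, one has $\nu_*^{-1}H = \nu^*H - 2\sum_j E_j$, and $\nu_*^{-1}H \cap E_j$ is the projectivized tangent quadric $\mathbb{P}(Q_{H,x_j}) \subset E_j \simeq \mathbb{P}^{n-1}$; the multiplicity of $\nu_*^{-1}H$ at a point $y \in E_j$ equals $2$ exactly on the singular locus $\mathbb{P}(\sing(Q_{H,x_j}))$.

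The decisive step is to show that the generic multiplicity of $\nu_*^{-1}\h$ at every $y \in Y$ is at most $1$. For $y \notin \bigcup_j E_j$ this reduces to checking that a general $H \in \h$ is smooth at $\nu(y)$; this fails only when $\nu(y) \in \Gamma_{g-1}(H)$, and Proposition~\ref{zero dimensional twd} guarantees $\bigcap_{H \in \h}\Gamma_{g-1}(H) = \{x_1,\ldots,x_{g-1}\}$, so a general $H$ works. For $y \in E_j$, I would apply Proposition~\ref{projection}(ii) and tangentially project from $\Span{\T_{x_i}X : i \neq j}$: since $X$ is perfect and non-defective, the projection $\tau_{g-2}$ is \'etale at $x_j$, and $X_{g-2,j}=\tau_{g-2}(X) \subset \mathbb{P}^{2n+1}$ is perfect, non-defective, of general rank $2$, and not $1$-twd. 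Under the induced isomorphism $\T_{x_j}X \simeq \T_{\tilde{x}_j}X_{g-2,j}$, the tangent quadric $Q_{H,x_j}$ corresponds to $Q_{H',\tilde{x}_j}$ for the associated $H' \in \h_{X_{g-2,j}}(1)$. Propositions~\ref{Contact locus linear} and~\ref{contact locus linear space} applied to $X_{g-2,j}$ identify $\mathbb{P}(\sing(Q_{H',\tilde{x}_j}))$ with $\mathbb{P}\bigl(\T_{\tilde{x}_j}\Gamma_1(H')\bigr)$, where $\Gamma_1(H') \subset X_{g-2,j}$ is a linear subspace through $\tilde{x}_j$. Proposition~\ref{zero dimensional twd} applied to $X_{g-2,j}$ with $A'=\{\tilde{x}_j\}$ then yields $\bigcap_{H'}\Gamma_1(H') = \{\tilde{x}_j\}$, so the intersection of the linear subspaces $\T_{\tilde{x}_j}\Gamma_1(H')$ is trivial and $\bigcap_{H'}\mathbb{P}(\sing(Q_{H',\tilde{x}_j})) = \emptyset$. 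Pulling back via $d\tau_{g-2}$, $\bigcap_{H}\mathbb{P}(\sing(Q_{H,x_j})) = \emptyset$ in $E_j$; so for any fixed $y \in E_j$ a general $H$ has $y$ outside the singular locus of $\mathbb{P}(Q_{H,x_j})$, giving $\mult_y \nu_*^{-1}H \leq 1$.

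To convert pointwise genericity into the uniform bound $\mult_y D_k < 1+\epsilon$, I would use a Grassmannian count: each $\mathbb{P}(\sing(Q_{H_i,x_j})) \subset E_j$ is a linear subspace of fixed dimension, and for $k$ general choices the intersection of any $\ell$ of them has expected dimension negative once $\ell$ exceeds some constant $L$ depending only on $n$ and $\dim\Gamma_{g-1}(H)$. Combined with the analogous bound off $\bigcup_j E_j$ coming from the $\Gamma_{g-1}(H_i)$'s in general position (again via Proposition~\ref{zero dimensional twd}), no point $y \in Y$ can lie in $\sing(\nu_*^{-1}H_i)$ for more than $L$ indices $i$. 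Bounding $\mult_y \nu_*^{-1}H_i \leq 2$ on these $L$ indices and by $1$ elsewhere gives
\[
\mult_y D_k \;\leq\; \frac{2L + (k-L)}{k} \;=\; 1+\frac{L}{k},
\]
which is strictly less than $1+\epsilon$ once $k > L/\epsilon$.

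The principal obstacle is the tangential projection reduction: one must justify carefully that $\tau_{g-2}$ identifies $\h$ on $X$ with $\h_{X_{g-2,j}}(1)$ and matches tangent quadrics under $d\tau_{g-2}$, and that Proposition~\ref{zero dimensional twd} applied to $X_{g-2,j}$ with the one-point set $A'=\{\tilde{x}_j\}$ delivers the crucial vanishing $\bigcap_{H'}\Gamma_1(H') = \{\tilde{x}_j\}$. A secondary technical point is the uniformity of the constant $L$, which follows from generic-position statements about the family of linear subspaces $\mathbb{P}\bigl(\T_{\tilde{x}_j}\Gamma_1(H')\bigr)$ as $H'$ varies.
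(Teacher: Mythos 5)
Your proposal has a genuine gap at its central step, the one taken off the exceptional divisors. You need the exact equality $\bigcap_{H\in\h}\sing(H)=\{x_1,\ldots,x_{g-1}\}$, but you deduce it from Proposition~\ref{zero dimensional twd}, which only says that the locus where $M_A$ is tangent to $X$ is zero-dimensional; it does not exclude extra isolated points $z_0\notin\{x_1,\ldots,x_{g-1}\}$ with $\T_{z_0}X\subset M_A$, and at any such $z_0$ \emph{every} member of $\h$ has multiplicity at least $2$. Since your $\nu$ is the blow-up at the $x_i$ only, it is an isomorphism near $z_0$, so any $\mathbb{Q}$-divisor built from strict transforms of members of $\h$ has multiplicity at least $2$ at the point over $z_0$: no averaging or Grassmannian general-position count can repair this, so with your choice of $Y$ the argument cannot close. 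This is precisely the difficulty the paper isolates in Remark~\ref{rem:zeropositivetwd}, and it is why its proof does not take $Y$ to be the blow-up of $X$ at the points: it first invokes \cite[Theorem 18]{CM} to get that $\tau_{g-2}$ is \emph{birational} (this uses the non-$(g-1)$-twd hypothesis; your assertion that $\tau_{g-2}$ is \'etale at $x_j$ ``since $X$ is perfect and non-defective'' only yields generic finiteness), then works on $X_{g-2}$, where Proposition~\ref{contact locus linear space} shows that $\Gamma(\h'(1))$ is a \emph{linear space}, hence irreducible, hence exactly the single point $\tau_{g-2}(x_1)$ --- this linearity/irreducibility is the step that rules out stray isolated common singular points, and it has no analogue for $\h(g-1)$ on $X$ itself. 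Only after blowing up that single point, emptying the exceptional directions, and passing to the completion of the Cartesian square does the paper obtain $\Gamma(\h_Y)=\emptyset$, and even then it needs a monodromy argument over the different choices of which $g-2$ of the points to project from.

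The parts of your proposal that do work are essentially the paper's own ingredients in disguise: your analysis over $E_j$, identifying $\mathbb{P}(\sing(Q_{H'}))$ with the projectivized tangent space of the linear contact locus via Propositions~\ref{contact locus linear space} and~\ref{Contact locus linear} and intersecting over $H'$, is the content of the paper's Claim that $\Gamma(\h_Z)\cap E=\emptyset$; and your averaging step is a more laborious version of the paper's remark that, once the common singular locus of the system on $Y$ is empty, at most $\dim\h_Y$ of the general members can be singular at any given point. But both are downstream of the missing input --- emptiness of the common singular locus on a correctly chosen model --- so as written the proof does not go through.
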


\begin{proof} The variety $X$ is non defective and not $(g-1)$-twd  Therefore, by
  \cite[Theorem 18]{CM}  the 
  tangential projection $\tau_{g-2}$, from $\{x_2,\ldots,x_{g-1}\}$ is
  birational. Let $X_{g-2}=\tau_{g-2}(X)$ be the image of the
  tangential projection and define $\h':=(\tau_{g-2})_*\h$.
  Then, by Proposition~\ref{projection} 
 $X_{g-2}\in\mathbb{P}^{2\dim X+1}$ is not defective and not $1$-twd.

Let $\sigma:Z\to X_{g-2}$ be the blow up of the point
$\tau_{g-2}(x_1)$, with exceptional divisor $E$ and $\h_Z=\sigma^{-1}_*\h'$.
\begin{claim}\label{cla:emty}  $\Gamma(\h_Z)$ is empty.
\end{claim}
\begin{proof}[Proof of the Claim] By Proposition~\ref{contact locus
  linear space} the tangential locus  $\Gamma_1(H)$ is a linear space. The
variety $X_{g-2}$ is not $1$-twd therefore
$\Gamma(\h')=\tau_{g-2}(x_1)$. This is enough to show that
$\Gamma(\h_Z)\subset E$.

Assume that there is a point  $ z \in\Gamma(\h_Z)\cap E$ and denote
by $l_{z} \subset\p^{2\dim X+1}$ the
corresponding line in the projective space.
By Proposition~\ref{Contact locus linear}  this forces
$l_z\subset\Gamma_1(H)$ and the contradiction
$l_y \in \Gamma(\h')$.
\end{proof}

Let $Y$ be the completion of the Cartesian square
\[
\xymatrix{
  Y \ar@{-->}[rr]^{\eta} \ar[d]_{\nu} && Z \ar[d]^{\sigma}\\
X \ar@{-->}[rr]_{\tau_{g-2}}  && X_{g-2}
}
\]
and $\h_Y=\nu^{-1}_*(\h)=\eta^{-1}_*(\h_Z)$ the strict transform
linear system. By construction and Claim~\ref{cla:emty} the set
$\Gamma(\h_Y)$ is contained in the locus where $\eta$ is not an
isomorphism.  On the
other hand, by monodromy, the same should be true for a different
choice of $(g-2)$ points in $\{x_1,\ldots,x_{g-1}\}$.   Thanks to the
general choice of the points $\{x_1,\ldots,x_{g-1}\}$ this is enough
to show that
$\Gamma(\h_Y)$ is empty.  In particular for any $y\in Y$ there are
divisors $H\in\h_Y$ with $\mult_yH\leq 1$.
To conclude we construct the divisor
$$D=\frac1M\sum_1^M H_i, $$
for $H_i\in\h_Y$ general. The locus $\Gamma(\h_Y)$ is empty therefore we may
assume that for any $y\in Y$ there are at most $\dim\h_Y$ divisors
in $\{H_i\}_{i=1,\ldots,M}$ singular in $y$. This is enough to conclude.    
\end{proof}

\section{Noether--Fano inequalities and generic identifiability} 

In this section we apply the previous results on the singular locus of
linear system $\h(g-1)$ to produce non generic identifiability
statements. 
We start recalling two results in this area.
\begin{thm}[\cite{Me1}]
\label{ident}
Let $X\subseteq\mathbb{P}^{N}$ be a projective, irreducible non-degenerate
variety. Suppose that $X$ is generically identifiable.
Then the $(g(X)-1)$-tangential projection $\tau_{g(X)-1}:X\dasharrow\mathbb{P}^{\dim(X)}$
is birational.
\end{thm}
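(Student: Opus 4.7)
The plan is to proceed by contrapositive: assume $\tau_{g-1}$ is not birational and produce two distinct decompositions of a general point of $\p^N$, contradicting generic identifiability. First I would set up the dimension count. Since $X$ is generically identifiable, Remark~\ref{rem:ident_easy} gives that $X$ is perfect and non defective. Writing $n=\dim X$ and $g=g(X)$, one has $N+1=g(n+1)$, and by Terracini's Lemma applied to a general set $A'=\{x_1,\dots,x_{g-1}\}$ the subspace $M_{A'}$ has dimension $(g-1)(n+1)-1=N-n-1$. Hence $\tau_{g-1}:X\dasharrow\p^n$ lands in the correct projective space, and non defectiveness forces it to be dominant and generically finite.

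Next, suppose for contradiction that $d:=\deg\tau_{g-1}\geq 2$ and pick $x_g'\ne x_g$ in the fiber over $\tau_{g-1}(x_g)$, so that $x_g'\in\langle M_{A'},x_g\rangle$. Take a general $p\in\langle x_1,\dots,x_g\rangle\subset\Sec_g(X)=\p^N$; by identifiability $\{x_1,\dots,x_g\}$ is the unique decomposition of $p$. Since $p\in\langle M_{A'},x_g\rangle=\langle M_{A'},x_g'\rangle$, there exist $w'\in M_{A'}$ and $\mu\in\C^*$ with $p=w'+\mu x_g'$. Using Terracini's Lemma to identify $M_{A'}=\T_q\Sec_{g-1}(X)$ at the general point $q=\sum\lambda_i x_i\in\langle A'\rangle$, one can write $w'-q=\sum\tau_i$ with $\tau_i\in\T_{x_i}X$, deform each $x_i$ along an analytic arc $\gamma_i\subset X$ with $\gamma_i'(0)=\tau_i$, and integrate to obtain points $x_i''\in X$ whose combination realises $w'$. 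This yields a second decomposition $p=\sum\alpha_i x_i''+\mu x_g'$, which differs from the original since $x_g'\ne x_g$, contradicting generic identifiability.

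The main obstacle is precisely the last step: promoting the first order tangential information supplied by Terracini's Lemma to a genuine finite deformation producing an honest second decomposition, as opposed to only a formal one. Conceptually the crux is the equality $\deg\pi_g^X=\deg\tau_{g-1}$, from which the theorem is immediate because identifiability means $\deg\pi_g^X=1$; to establish it rigorously one has to control the fibres of $\pi_g^X$ over a general $p$ by projecting them against the base $A'$ chosen for the tangential projection, so that the monodromy of decompositions matches the monodromy of $\tau_{g-1}$. Perfectness and non defectiveness enter exactly here, guaranteeing that $\Sec_{g-1}(X)$ locally fills its tangent space $M_{A'}$ in the sense needed to integrate the Terracini deformation, and that no decompositions are lost in the dimension bookkeeping.
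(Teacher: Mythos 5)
Your overall strategy (contrapositive: a second point $y\ne x_g$ in the fibre of $\tau_{g-1}$ should force a second decomposition of a general $p\in\Span{x_1,\dots,x_g}$) is the natural one, and the preliminary reductions are fine: identifiability gives perfectness and non defectiveness via Remark~\ref{rem:ident_easy}, and the linear algebra $p=w'+\mu x_g'$ with $w'\in M_{A'}$ is correct. The proof breaks down, however, exactly at the step you yourself flag: from $w'\in M_{A'}=\T_q\Sec_{g-1}(X)$ you cannot ``integrate'' to points $x_i''\in X$ whose combination \emph{equals} $w'$. The embedded tangent space only osculates $\Sec_{g-1}(X)$ to first order at $q$; since $X$ is not $(g-1)$-defective, $\Sec_{g-1}(X)$ and the linear space $M_{A'}$ both have dimension $N-\dim X-1$, so a general point of $M_{A'}$ does not lie on $\Sec_{g-1}(X)$ at all (unless $\Sec_{g-1}(X)$ were itself a linear space). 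Choosing arcs $\gamma_i\subset X$ with $\gamma_i'(0)=\tau_i$ only produces points of $\Sec_{g-1}(X)$ whose displacement from $q$ agrees with a multiple of $\sum\tau_i$ to first order; it never hits the prescribed finite point $w'$. Hence no honest second decomposition of $p$ is produced, only an infinitesimal one; and the naive limit version of the argument fails too, because the number of decompositions is only upper semicontinuous in the wrong direction (it can jump up on the special locus $\Span{M_{A'},x_g}$ without affecting the general point).

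The ``conceptual crux'' you then invoke, $\deg\pi_g^X=\deg\tau_{g-1}$ (or even just $\deg\pi_g^X\geq\deg\tau_{g-1}$), is precisely the nontrivial content of the theorem, and in your write-up it is asserted rather than proved: ``matching the monodromy of decompositions with the monodromy of $\tau_{g-1}$'' is not substantiated by any construction. Note that the present paper does not prove this statement either; it quotes it from \cite{Me1}, and the known arguments (Mella's original one, and the fibre comparison between $\pi_g^X$ and $\tau_{g-1}$ in \cite{CM}) go through the tangent-hyperplane linear system $\h(g-1)$ and contact-locus techniques rather than a direct first-order Terracini deformation. So as it stands the proposal has a genuine gap at its central step; to repair it you would need an actual mechanism converting a nontrivial fibre of $\tau_{g-1}$ into a nontrivial fibre of $\pi_g^X$ over a \emph{general} point, which is what the cited results supply.
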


\begin{thm}[Noether-Fano Inequalities \cite{Co}]
\label{Noether-Fano}
Let $\pi:X\rightarrow X'$ and $\rho:Y\rightarrow Y'$
be two Mori fiber spaces and $\varphi:X\dashrightarrow Y$ a
birational, not biregular,
map

\[
\xymatrix{
  X \ar@{-->}[r]^{\varphi} \ar[d]^{\pi} & Y \ar[d]^{\rho}\\
X' & Y'
}
\]

 Choose a very ample linear system $\mathcal{H}_{Y}$ in $Y$
and let $\mathcal{H}_{X}=\varphi_*^{-1}(\mathcal{H}_{Y})$. Let $a\in\mathbb{Q}$
such that $\mathcal{H}_{X}\equiv-aK_{X}+\pi^{*}(A)$ for some divisor
$A\in Pic(S)$.

Then either $(X,\frac{1}{a}\mathcal{H}_{X})$ has not canonical singularities
or $K_{X}+\frac{1}{a}\mathcal{H}_{X}$ is not NEF.
\end{thm}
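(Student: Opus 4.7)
The plan is to argue by contradiction. Assume both conclusions fail: the pair $(X,\frac{1}{a}\mathcal{H}_X)$ has canonical singularities and the divisor $K_X+\frac{1}{a}\mathcal{H}_X$ is nef. Under the given equivalence $\mathcal{H}_X\equiv -aK_X+\pi^*A$, this nef class is numerically $\frac{1}{a}\pi^*A$; in particular it is pulled back from $X'$ and has trivial intersection with every curve contracted by $\pi$. The goal will be to show that these hypotheses force $\varphi$ to be biregular.

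The first step is to take a common log resolution $p\colon W\to X$ and $q\colon W\to Y$ of the birational map $\varphi$, with $W$ smooth and the relevant exceptional divisors and strict transforms in simple normal crossing position. Since $\mathcal{H}_X$ and $\mathcal{H}_Y$ correspond under $\varphi$, their strict transforms on $W$ coincide; call this common strict transform $\tilde{\mathcal{H}}$. I then introduce the standard discrepancy and multiplicity data
\begin{align*}
K_W &= p^*K_X+\textstyle\sum a_iE_i, & p^*\mathcal{H}_X &= \tilde{\mathcal{H}}+\textstyle\sum m_iE_i,\\
K_W &= q^*K_Y+\textstyle\sum b_iE_i, & q^*\mathcal{H}_Y &= \tilde{\mathcal{H}}+\textstyle\sum n_iE_i,
\end{align*}
indexed over a common set of prime exceptional divisors. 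The canonical hypothesis on $X$ reads $m_i\le a\,a_i$ for every $p$-exceptional $E_i$, while an analogous expression $\mathcal{H}_Y\equiv -a'K_Y+\rho^*B$ on the $Y$-side, with $a'>0$ because $\mathcal{H}_Y$ is very ample and $-K_Y$ is $\rho$-ample with relative Picard number one, provides the complementary information on the other side.

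The heart of the argument is the comparison of the two pullbacks $p^*(K_X+\frac{1}{a}\mathcal{H}_X)$ and $q^*(K_Y+\frac{1}{a}\mathcal{H}_Y)$ on $W$. After subtracting the two identities one obtains an exceptional $\mathbb{Q}$-divisor $\sum c_iE_i$ whose coefficients $c_i=(m_i-n_i)/a+b_i-a_i$ have signs controlled by the canonical inequalities on the $X$-side and by symmetric inequalities on the $Y$-side. Testing against a $\rho$-extremal curve $\gamma$ in a general fiber of $\rho$, lifted to $W$, the nef hypothesis on $K_X+\frac{1}{a}\mathcal{H}_X$ combined with $\pi^*A\cdot\gamma\ge 0$ forces one sign, while the very ampleness of $\mathcal{H}_Y$ evaluated on $\gamma$ forces the opposite sign with a strict inequality whenever some divisor is $q$-exceptional but not $p$-exceptional, or conversely. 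The non-biregularity hypothesis on $\varphi$ guarantees the existence of at least one such divisor, producing the contradiction and thereby the theorem.

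The main obstacle is this final sign-comparison step, specifically the verification that the chosen extremal test curves detect the $q$-exceptional locus in the right way. This is where the Mori fiber space hypothesis on both sides is essential: it produces the extremal rays with controlled intersection numbers, and the relative Picard number one ensures $\pi^*A$ and $\rho^*B$ contribute nonnegatively when paired against fiber curves. Closing this cleanly typically requires invoking the negativity-of-contractions lemma to compare the effective parts of the exceptional divisors of $p$ and $q$, together with a careful bookkeeping of the canonical thresholds $a$ and $a'$; avoiding a subtle circularity in this comparison is the genuine technical core of the argument.
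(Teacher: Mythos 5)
The paper does not prove this statement at all: Theorem~\ref{Noether-Fano} is quoted verbatim from Corti's paper \cite{Co} and used as a black box, so there is no in-paper argument to measure your sketch against. What you have written is an outline of the standard published proof (common log resolution, discrepancy bookkeeping on both sides, test curves in the fibres of $\rho$, negativity lemma), and that is indeed the right road map.

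However, as a proof your proposal has a genuine gap, and moreover the one concrete mechanism you describe for closing it is wrong. A general curve $\gamma$ in a general fibre of $\rho$ avoids the images of the $q$-exceptional divisors (these have codimension at least two in $Y$), so its strict transform $\tilde\gamma$ on $W$ satisfies $\tilde\gamma\cdot F_j=0$ for every $q$-exceptional $F_j$. Consequently the intersection computation you set up yields only the numerical inequality $a'\geq a$ (where $\mathcal{H}_Y\equiv -a'K_Y+\rho^*B$); it cannot ``detect the $q$-exceptional locus'' or produce a strict inequality from the existence of a divisor that is $q$-exceptional but not $p$-exceptional, contrary to what you assert. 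The actual contradiction with non-biregularity comes from the second half of Corti's argument: one applies the negativity lemma to the exceptional $\mathbb{Q}$-divisor $q^*(K_Y+\frac1a\mathcal{H}_Y)-p^*(K_X+\frac1a\mathcal{H}_X)$ to compare the two effective exceptional parts, deduces that $\varphi$ is an isomorphism in codimension one, and then identifies the two Mori fibre space structures; only at that stage does ``$\varphi$ not biregular'' get contradicted. You explicitly defer exactly this step (``the genuine technical core''), so the decisive part of the proof is missing, and the sign-comparison heuristic you offer in its place does not work as stated.
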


We are ready to connect the contact loci properties and the
Noether--Fano inequalities to produce a tool for non identifiability statements.

\begin{thm}\label{Main Theorem NF}
Let $X^{n}\subset\mathbb{P}^{N}$ be a projective smooth non-degenerate variety and $\tau_{g-1}:X\dashrightarrow\mathbb{P}^{\dim X}$
be a general tangential projection, associated to the linear system $\h:=\h(g-1)$. Assume that
\begin{itemize}
\item[-]
$\pi:X\rightarrow S$ is a structure of a Mori fiber space such
that $$\mathcal{H}\equiv_{\pi}-aK_{X}+\pi^{*}(A)$$ with $a> 1$ a rational
number and $A\in Pic(S)$ 
\item[-]
The $\mathbb{Q}$-divisor $K_{X}+\frac{1}{a}\mathcal{H}$
is NEF
\item[-]
$X$ is not $(g-1)-$twd
\end{itemize}

Then $\tau_{g-1}$ is not birational, in particular $X$ is not
generically identifiable.
\end{thm}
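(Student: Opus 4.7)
The plan is by contradiction. I assume $\tau_{g-1}$ is birational and derive a contradiction by combining Theorem~\ref{Noether-Fano} with Theorem~\ref{main theorem}. Since $X$ is perfect and non defective with generic rank $g\geq 2$, we have $N+1=g(\dim X+1)>\dim X+1$, so $X\neq\mathbb{P}^{\dim X}$; moreover $\tau_{g-1}$ is undefined at the general base points $x_1,\ldots,x_{g-1}$ of $\mathcal{H}$, hence it cannot be biregular. By Remark~\ref{rem:Terracini_tangential} the map $\tau_{g-1}$ is induced by the linear system $\mathcal{H}$, so $(\tau_{g-1})^{-1}_{*}|\mathcal{O}_{\mathbb{P}^{\dim X}}(1)|=\mathcal{H}$. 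Endowing $\mathbb{P}^{\dim X}$ with its trivial Mori fiber structure and taking $\pi\colon X\to S$ on the source, the assumption $\mathcal{H}\equiv_{\pi}-aK_X+\pi^{*}(A)$ together with the NEF-ness of $K_X+\frac{1}{a}\mathcal{H}$ put us exactly in the setting of Theorem~\ref{Noether-Fano}. The NEF hypothesis rules out the second alternative, so the pair $(X,\frac{1}{a}\mathcal{H})$ has to be non canonical.

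Non canonicity produces a prime divisor $F$ over $X$, lying on some birational model $W\to X$, with $\mult_F\mathcal{H}>a\cdot a(F;X)$. To contradict this I invoke Theorem~\ref{main theorem}: since $X$ is perfect, non defective, and not $(g-1)$-twd, there exist a birational model $\nu\colon Y\to X$ and, for every $\epsilon>0$, a $\mathbb{Q}$-divisor $D_\epsilon\equiv \nu^{-1}_{*}\mathcal{H}$ with $\mult_y D_\epsilon<1+\epsilon$ for every $y\in Y$. Since $a>1$, choosing $\epsilon<a-1$ yields $\frac{1+\epsilon}{a}<1$; a direct discrepancy computation at the divisorial valuation associated to the blow up of any point of $Y$ shows that the pair $(Y,\frac{1}{a}D_\epsilon)$ is canonical along the whole $Y$. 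Transferring this back via the standard formula
$$K_Y+\tfrac{1}{a}\nu^{-1}_{*}\mathcal{H}=\nu^{*}\bigl(K_X+\tfrac{1}{a}\mathcal{H}\bigr)+E_Y-\tfrac{1}{a}E_{\mathcal{H}},$$
where $E_Y$ and $E_{\mathcal{H}}$ collect the exceptional contributions of $\nu$, I conclude that no divisor $F$ over $X$ can realise the required maximal singularity, contradicting non canonicity. The non generic identifiability of $X$ then follows from Theorem~\ref{ident}.

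The hard part will be making the last step precise. The bound of Theorem~\ref{main theorem} concerns the specific divisor $D_\epsilon$ and not the whole mobile linear system $\nu^{-1}_{*}\mathcal{H}$, so one must propagate the pointwise multiplicity estimate to all divisorial valuations over $Y$. The natural strategy is to exploit the construction $D_\epsilon=\frac{1}{M}\sum H_i$ as an average of $M$ general members: the emptiness of $\Gamma(\nu^{-1}_{*}\mathcal{H})$ established in the proof of Theorem~\ref{main theorem} ensures that at any prescribed centre only a controlled number of the $H_i$ contribute, keeping the multiplicity under $1+\epsilon$ even after further infinitesimal blow ups. A careful bookkeeping of the discrepancies of $\nu$ is then needed to transfer the canonicity on $Y$ to canonicity on $X$; this is the genuine technical ingredient, the rest of the argument being a formal combination of Theorem~\ref{Noether-Fano}, Theorem~\ref{main theorem} and Theorem~\ref{ident}.
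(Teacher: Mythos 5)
Your proof is essentially the paper's: the same three ingredients appear in the same roles (Theorem~\ref{main theorem} to produce the multiplicity-controlled divisor and hence canonicity of the pair, Theorem~\ref{Noether-Fano} plus the NEF hypothesis to exclude birationality of $\tau_{g-1}$, Theorem~\ref{ident} to pass to non generic identifiability); the contradiction framing and the detour through an explicit maximal-singularity divisor $F$ are only cosmetic, since canonicity can be fed directly into Noether--Fano. Two remarks. First, perfectness and non defectivity are not hypotheses of the theorem, so your opening assertion needs justification: the paper first disposes of the degenerate case where $\pi^X_g$ is of fiber type via \cite[Lemma 16 (i)]{CM} and Theorem~\ref{ident}; in your setup the same reference together with Remark~\ref{rem:ident_easy} shows that the assumed birationality of $\tau_{g-1}$ forces $X$ to be perfect and not defective, and this should be said explicitly. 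Second, the step you defer (propagating $\mult_y D_\epsilon<1+\epsilon$ from points of $Y$ to all divisorial valuations, and descending canonicity from $(Y,\frac{1}{a}\nu^{-1}_*\mathcal{H})$ to $(X,\frac{1}{a}\mathcal{H})$) is precisely the one-line deduction the paper draws from Theorem~\ref{main theorem}: your observation that checking blow-ups of points of $Y$ alone does not settle canonicity is correct, and the intended standard argument combines the fact that multiplicity does not increase under blow-up with the inequality $\mult_E D_\epsilon\geq\mult_E\nu^{-1}_*\mathcal{H}$ for every valuation $E$; so your plan is the right one, but as written this step remains a sketch, at the same level of detail as the paper rather than beyond it.
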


\begin{proof} If $\pi^X_g:sec_g(X)\to\p^N$ is of fiber type  then $\tau_{g-1}$ is of fiber type,
  see for instance \cite[Lemma16 (i)]{CM}, and we conclude, by
  Theorem~\ref{ident}, that $X$ is not identifiable.

  Then we may assume that $X$
  is perfect and not defective. In particular $\tau_{g-1}$ is a not biregular map
  onto $\p^{\dim X}$.

By Theorem~\ref{main theorem} there
  is a  variety $Y$ and a birational map $\nu:Y\to X$ with the
following property: for any $\epsilon>0$ there is a ${\mathbb Q}$-divisor $D$,
with $D\equiv \nu^{-1}_*\h(g-1)$ such that for any point $y\in Y$
$$\mult_yD<1+\epsilon.$$
In particular $(Y,\frac1a\nu^{-1}_*(\h(g-1)))$ and henceforth
$(X,\frac1a\h(g-1))$ have canonical singularities. Then, by
Theorem~\ref{Noether-Fano}  applied to the diagram
\[
\xymatrix{
  X \ar@{-->}[rr]^{\tau_{g-1}} \ar[d]^{\pi} && \p^{\dim X} \ar[d]\\
S && Spec{\mathbb C}
},
\]
the map  $\tau_{g-1}$ cannot be birational and therefore $X$
is not generically identifiable by Theorem~\ref{ident}.
\end{proof}

We are ready to prove the non identifiability statement announced
in the introduction.

\begin{dfn}
  Let $\textbf{n}=(n_1,\dots,n_r)$ and $\textbf{d}=(d_1,\dots,d_r)$ be two $r-$uples of positive integers.
The Segre-Veronese variety $SV^{\textbf{\textit{n}}}_{\textbf{\textit{d}}}$ is
the embedding of $$\mathbb{P}^{n_{1}}\times\dots\times\mathbb{P}^{n_{r}}\subset\mathbb{P}^{{\prod}\binom{n_{i}+d_{i}}{n_{i}}-1}$$ via
the complete linear system $|\pi_{1}^{*}\mathcal{O}_{\mathbb{P}^{n_{1}}}(d_{1})\otimes...\otimes\pi_{r}^{*}\mathcal{O}_{\mathbb{P}^{n_{r}}}(d_{r})|$ where $$\pi_{i}:\p^{n_{1}}\times\dots\times\p^{n_{r}}\rightarrow \p^{n_{i}}$$ are the canonical projections.
\end{dfn}

\begin{thm}Fix two multiindexes  $\textbf{\textit{n}}=(n_1,\dots,n_r)$
  and $\textbf{\textit{d}}=(d_1,\dots,d_r)$. Let
  $X=SV^{\textbf{\textit{n}}}_{\textbf{\textit{d}}}$ the corresponding
  Segre-Veronese variety. Assume that $d_i> n_i+1$, for $i=1,\ldots, r$,
  and  $$\lceil\frac{{\prod}\binom{n_{i}+d_{i}}{n_{i}}}{{\sum}n_{i}+1}\rceil>2(\sum n_{i}).$$
Then $X$ is not generically identifiable.
\end{thm}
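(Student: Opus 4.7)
The plan is to apply Theorem~\ref{Main Theorem NF} to $X = SV^{\textbf{n}}_{\textbf{d}}$, which reduces the proof to checking its three hypotheses. I begin by recording the numerical data: setting $n := \sum_i n_i = \dim X$, $H_i := \pi_i^*\mathcal{O}_{\mathbb{P}^{n_i}}(1)$, and $g := \lceil \prod_i \binom{n_i+d_i}{n_i}/(n+1) \rceil$, one has $K_X = -\sum_i(n_i+1)H_i$ and $\mathcal{H}(g-1) \equiv \sum_i d_i H_i$ as divisor classes, and the two numerical hypotheses simplify to $d_i > n_i+1$ for all $i$ together with $g > 2n$.

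Next I produce the Mori fibration. I choose an index $j$ minimising $d_i/(n_i+1)$ and take $\pi : X \to S := \prod_{i\neq j}\mathbb{P}^{n_i}$ to be the projection omitting the $j$-th factor; its fibres are $\mathbb{P}^{n_j}$ and the relative Picard rank is one, so $\pi$ is a Mori fibration. Setting $a := d_j/(n_j+1) > 1$, which is legitimate since $d_j > n_j+1$, and working modulo $\pi^*\pic(S) = \bigoplus_{i\neq j}\mathbb{Z}H_i$, one has $\mathcal{H}(g-1) \equiv_\pi d_j H_j = -aK_X$, which verifies the first hypothesis of Theorem~\ref{Main Theorem NF}. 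The $H_i$-coefficient of $K_X + \tfrac{1}{a}\mathcal{H}(g-1)$ equals $\tfrac{d_i(n_j+1) - d_j(n_i+1)}{d_j}$, which vanishes for $i=j$ and is nonnegative for $i\neq j$ by the minimality of $d_j/(n_j+1)$; since the nef cone of $\prod_i\mathbb{P}^{n_i}$ is spanned by the $H_i$, this shows $K_X + \tfrac{1}{a}\mathcal{H}(g-1)$ is nef, yielding the second hypothesis.

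The hard part is the third hypothesis, that $X$ is not $(g-1)$-twd. Both numerical conditions come in here: $d_i > n_i + 1$ is the standard assumption guaranteeing non-defectivity of $X$ in the range of interest, while $g > 2n$ places us inside the regime where tangential non-defectivity for Segre--Veronese varieties can be controlled. The plan is to combine Terracini's Lemma with a direct dimension count at a generic set $A = \{x_1,\ldots,x_{g-1}\}$ to force $\dim\Gamma_{g-1}(A) = 0$, invoking \cite[Theorem~18]{CM} together with the subgeneric non-twd analysis of \cite{CO1}, whose numerical thresholds are met precisely once $g > 2\dim X$. Once non-twd is in hand, all three hypotheses of Theorem~\ref{Main Theorem NF} hold and the theorem immediately gives that $\tau_{g-1}$ is not birational; combined with Theorem~\ref{ident}, this forces $X$ to be not generically identifiable.
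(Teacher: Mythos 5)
Your reduction to Theorem~\ref{Main Theorem NF} and your verification of its first two hypotheses are exactly the paper's argument: you project away from the factor minimising $d_i/(n_i+1)$, set $a=d_j/(n_j+1)>1$ (this is where $d_i>n_i+1$ is actually used), observe $\mathcal{H}(g-1)\equiv_\pi -aK_X$, and check nefness of $K_X+\frac1a\mathcal{H}(g-1)$ by the sign of its coefficients, which is dual to the paper's intersection with the lines $l_i$. Up to that point the proposal is correct and coincides with the paper.

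The gap is in the third hypothesis. You assert that $d_i>n_i+1$ ``is the standard assumption guaranteeing non-defectivity of $X$'', but no such result is available (non-defectivity of Segre--Veronese varieties is not known in this generality), and the paper never uses the inequality for that purpose. The paper instead disposes of the problematic case at the outset: if $X$ is defective or not perfect, it is automatically not generically identifiable by Remark~\ref{rem:ident_easy}, so one may \emph{assume} $X$ perfect and non-defective; only then does the numerical condition $g>2\dim X$ combine with \cite[Corollary~22]{CM} to give that $X$ is not $(g-1)$-twd. Your treatment of non-twd is also left as a plan rather than a proof: a ``direct dimension count with Terracini'' does not by itself rule out positive-dimensional tangential contact loci (that is precisely the content of the quoted result of \cite{CM}), and the references you invoke point in the wrong direction --- \cite[Theorem~18]{CM} concerns birationality of tangential projections for non-twd varieties (it is used inside Theorem~\ref{main theorem}), and \cite{CO1} treats $3$-tensors, not general Segre--Veronese varieties. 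The fix is short: add the case split via Remark~\ref{rem:ident_easy}, and in the remaining case cite the non-twd criterion of \cite{CM} (non-defective plus $g>2\dim X$ implies not $(g-1)$-twd) instead of claiming non-defectivity from $d_i>n_i+1$.
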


\begin{proof} If $X$ is defective or non perfect the statement is
  clear, recall Remark~\ref{rem:ident_easy}. Assume that
  $X$ is not defective and perfect. Then
  $\tau_{g-1}:X\dasharrow\p^{\dim X}$ is generically finite. The numerical
  assumption
  reads
  $$g(X)=\lceil\frac{{\prod}\binom{n_{i}+d_{i}}{n_{i}}}{{\sum}n_{i}+1}\rceil>2(\sum n_{i})=2\dim(X) $$
and,  by \cite[Corollary 22]{CM}, the variety  $X$
is not $(g-1)-$twd.

After reordering the indexes we may assume that
\begin{equation}
  \label{eq:NEF}
  \frac{n_1+1}{d_1}\geq \frac{n_i+1}{d_i}, \mbox{ for any $i$}.
\end{equation}

Let $p:X\to Y$ be the canonical projection onto the
Segre-Veronese $Y=SV^{(n_2,\ldots,n_r)}_{(d_2,\ldots,d_r)}$ and
$a=\frac{d_1}{n_1+1}>1$.
Then $p$ is a Mori fiber Space and
$$K_{X}+\frac{1}{a}\mathcal{H}(g-1){\equiv_p} 0.$$
Further note that the cone of effective divisor of $X$ is spanned by
the lines in the factors $\p^{n_i}$ and, by Equation~(\ref{eq:NEF}), we have
$$ K_{X}+\frac{1}{a}\mathcal{H}(g-1)\cdot
l_i=-(n_i+1)+\frac{n_1+1}{d_1}d_i\geq 0,$$
This shows that $ K_{X}+\frac{1}{a}\mathcal{H}(g-1)$ is NEF and, by
Theorem~\ref{Main Theorem NF},
 we prove  that $X$ is not generically identifiable.
\end{proof}

\begin{remark}
In recent years the Secant varieties of Segre-Veronese varieties have
been studied intensively,  see for instance
\cite{AB},\cite{AMR},\cite{BBC1},\cite{BCC}. 
However, to the best of our knowledge, this is the first  result
regarding non generic identifiability for infinite classes of Segre-Veronese varieties with $r\geq2$.
\end{remark}

\end{document}